\documentclass[10pt,oneside,leqno]{amsart}
\usepackage{amsxtra}
\usepackage{amsopn}
\usepackage{amsmath,amsthm,amssymb}
\usepackage{amscd}
\usepackage{amsfonts}
\usepackage{latexsym}
\usepackage{verbatim}
\usepackage{pb-diagram}
\newcommand{\n}{\noindent}
\newcommand{\lr}{\longrightarrow}

\theoremstyle{plain}
\newtheorem{theorem}{Theorem}[section]
\newtheorem*{theorem*}{Theorem}
\newtheorem{definition}[theorem]{Definition}
\newtheorem{lemma}[theorem]{Lemma}
\newtheorem{prop}[theorem]{Proposition}
\newtheorem{cor}[theorem]{Corollary}
\newtheorem{rem}[theorem]{Remark}
\newtheorem{ex}[theorem]{Example}
\newtheorem*{mt*}{Main Theorem}
\sloppy

%%%%%%%%%%%%%%%%%%%%%%%%%%%%%%%%%%%%%%%%%%%%%%%%%%% SIMBOLI MATEMATICI %
\newcommand\C{{\mathbb C}}

\newcommand\R{{\mathbb R}}

%=====================================================
\setlength{\textwidth}{15cm} \setlength{\evensidemargin}{1cm}
\setlength{\oddsidemargin}{1cm}
%=====================================================
\begin{document}
\title[On some cohomological properties of almost complex manifolds]{On some cohomological properties of
almost complex manifolds}
\author{Anna Fino and Adriano Tomassini}
\date{\today}
\address{Anna Fino, Dipartimento di Matematica \\ Universit\`a di Torino\\
Via Carlo Alberto 10 \\
10123 Torino\\ Italy} \email{annamaria.fino@unito.it}
\address{Adriano Tomassini, Dipartimento di Matematica\\ Universit\`a di Parma\\ Viale G.P. Usberti 53/A\\
43100 Parma\\ Italy} \email{adriano.tomassini@unipr.it}
\subjclass[2000]{53C55, 53C25, 32C10}
\thanks{This work was supported by the Projects MIUR ``Riemannian Metrics and Differentiable Manifolds'',
``Geometric Properties of Real and Complex Manifolds'' and by GNSAGA
of INdAM}
\begin{abstract} We study a special type of almost complex structures,
called {\em pure and full} and introduced by T.J. Li and W. Zhang in \cite{LIZhang}, in
relation to symplectic structures and Hard Lefschetz condition. We
provide sufficient conditions to the existence of the above type of
almost complex structures on compact quotients of Lie groups by discrete
subgroups. We obtain families of pure and full almost complex structures on compact
nilmanifolds and solvmanifolds. Some
of these families are parametrized by real $2$-forms which are anti-invariant with
respect to the almost complex structures.
\end{abstract}
\maketitle
\section{Introduction}

Let $M$ be a compact oriented manifold of dimension $2n$. A
symplectic form $\omega$ compatible with the orientation is a
closed $2$-form $\omega$ such that the $2n$-form $\omega^n$ is a
volume form compatible with the orientation. An almost complex
structure $J$ on a symplectic manifold $(M, \omega)$ is said to be
{\em tamed} by $\omega$ if $\omega_x (u, J u) >0$, for every $x \in
M$ and every tangent vector $u \neq 0 \in T_x M$. $J$ is called {\em
calibrated} by $\omega$ or, equivalently, $\omega$ is said to be {\em compatible} with $J$ if in
addition $\omega_x (Ju, Jv) = \omega_x (u, v)$, for any pair of
tangent vectors $u$ and $v$. In this case the pair $(\omega, J)$
is an {\em almost-K\"ahler} structure or, equivalently, $J$
is said to be {\em almost-K\"ahler}.

Let ${\mathcal C}(M)$ be the {\em symplectic cone} of $M$, i.e. the image
of the space of symplectic forms on $M$ compatible with the orientation
under the projection to the de Rham cohomology $H^2(M, \R)$. In
\cite{LIZhang} T. J. Li and W. Zhang have studied the following
subcones of ${\mathcal C} (M)$: the {\em $J$-tamed symplectic cone}, i.e
$$
{\mathcal K}_J^t (M) = \left\{ [\omega] \in H^2 (M, \R) \,\, \vert \,\, \omega \,
{\mbox {is tamed by}} \, J \right\}
$$
and the {\em $J$-compatible symplectic cone}
$$
{\mathcal K}_J^c (M) = \left\{ [\omega] \in H^2 (M, \R) \, \vert \, \omega \,
{\mbox {is compatible with}} \, J \right\}.
$$
An almost complex structure $J$ is {\em integrable} if its Nijenhuis
tensor vanishes. For almost-K\"ahler manifolds $(M, J, \omega)$,
one has that the cone ${\mathcal K}_J^c (M)$ is not empty and, when
$J$ is integrable, ${\mathcal K}_J^c(M) $ is equal to the usual
K\"ahler cone.

In \cite{LIZhang} it was studied the relation between the $J$-tamed
symplectic cone and the $J$-compatible symplectic cone in the case
of an integrable almost complex structure $J$, showing that if $
{\mathcal K}_J^c (M) $ is non-empty, then one has the splitting
$$
{\mathcal K}_J^t (M) = {\mathcal K}_J^c (M) + \left[ (H_{\overline
\partial}^{2,0} (M) \oplus H_{\overline \partial}^{0,2}
(M)) \cap H^2(M, \R) \right],$$ where $H_{\overline \partial}^{p,q} (M)$ denotes the
$(p,q)$-Dolbeault cohomology of the complex manifold $(M, J)$. In order to generalize the
previous result to the case of non-integrable almost complex
structures, they study differential forms and currents on almost
complex manifolds. The complex of currents
on an almost complex manifold $(M, J)$ is the dual to the complex
of differential forms and vice versa. On an almost complex manifold
$(M,J)$, the space of real $k$-currents ${\mathcal E}_k (M)_{\R}$
has a type decomposition:
$$
{\mathcal E}_k (M)_{\R} = \bigoplus_{p + q =k} {\mathcal E}_{p,q}^J (M)_{\R}\,.
$$ Denote by ${\mathcal Z}_{1,1}^J$
and ${\mathcal B}_{1,1}^J$ respectively the
space of real closed bidimension $(1,1)$ currents and the one of real boundary bidimension $(1,1)$
currents.
 Consider the space ${\mathcal
Z}_{(2,0),(0,2)}^J$ (respectively ${\mathcal B}_{(2,0),(0,2)}^J$)
of closed (resp. boundary) real $2$-currents which are sums of
currents of bidimension $(2,0)$ and $(0,2)$. Then, by using the
results of \cite{HL}, in \cite{LIZhang} the notions of pure and
full almost complex structure have been introduced. More
precisely, an almost complex structure $J$ on $(M, J)$ is {\em
pure} if $$\frac {{\mathcal Z}_{1,1} ^J }{{\mathcal B}_{1,1} ^J}
\cap \left( \frac{{\mathcal Z}_{(2,0),(0,2)}^J } {{\mathcal
B}_{(2,0),(0,2)}^J} \right) =0$$ and it is {\em full} if
$$
\frac{{\mathcal Z}_{2}} {{\mathcal B}_{2}}= \frac {{\mathcal
Z}_{1,1} ^J }{{\mathcal B}_{1,1} ^J} + \left( \frac{{\mathcal
Z}_{(2,0),(0,2)}^J } {{\mathcal
B}_{(2,0),(0,2)}^J} \right),
$$
where ${\mathcal Z}_{2}$ (resp. ${\mathcal B}_{2}$) are the space of real closed (boundary) $2$-currents.

One can give similar notions of ${\mathcal C}^{\infty}$ pure and
full almost complex structures by considering differential forms
instead of currents, i.e. $J$ is ${\mathcal C}^{\infty}$ pure and
full if and only if
$$
H^2 (M, \R) = H^{1,1}_J (M)_{\R} \oplus H^{(2,0),(0,2)}_J (M)_{\R},
$$
where
$$
H^{1,1}_J (M)_{\R} = \{ [\alpha] \, \vert \, \alpha \in {\mathcal Z}^{1,1}_J \}, \quad
H^{(2,0),(0,2)}_J (M)_{\R} = \{ [\alpha] \, \vert \, \alpha \in {\mathcal Z}^{(2,0),(0,2)}_J \}
$$
and ${\mathcal Z}^{1,1}_J$, ${\mathcal Z}^{(2,0),(0,2)}_J$ are defined in a similar way as before, i.e. are
respectively the closed $J$-invariant and the $J$-anti-invariant forms.
In general, there is no relation between the two notions. If $J$ a smooth closed almost
complex structure, i.e.
it is such that the image of the space ${\mathcal B}_2$ of real boundary $2$-currents under
the projection $\pi_{1,1}^J$
is a (weakly) closed subspace of
${\mathcal E}_{1,1}^J (M)_{\R}$, then some relations between the spaces $H^{1,1}_J (M)_{\R}$ and $ \frac {{\mathcal
Z}_{1,1} ^J }{{\mathcal B}_{1,1} ^J}$ are found in \cite{LIZhang}. Moreover, on a compact manifold of real dimension
$4$ any almost complex structure is
${\mathcal C}^{\infty}$
pure and
full by \cite[Theorem 2.3]{DLZ}.

In Section \ref{calibrated} we review some known facts about
calibrated almost complex structures and some properties of almost-K\"ahler manifolds. In
Section \ref{maintheorems} we study
${\mathcal C}^{\infty}$ full and pure almost complex structures on
 compact quotients $M = \Gamma \backslash G$, where $\Gamma$ is a
uniform discrete subgroup of a Lie group $G$, such that the de
Rham cohomology $H^2(M, \R)$ of $M$ is isomorphic to the
Chevalley-Eilenberg cohomology $H^2 ({\mathfrak g} )$ of the Lie
algebra ${\mathfrak g}$ of $G$. In this case we show that $ H^{1,1}_J (M)_{\R}$ and $H^{(2,0),(0,2)}_J
(M)_{\R}$ can be determined by using invariant forms (Theorem \ref{relwithinvforms}).

If $\omega$ is a non-degenerate $2$-form on a $2n$-dimensional compact manifold $M$, then we prove that
 a ${\mathcal C}^{\infty}$
pure and full almost complex structure $J$ calibrated by $\omega$ is pure. Moreover,
if, in addition, either $n = 2$ or if any
cohomology class in $H^{1,1}_J (M)_{\R}$ ($H^{(2,0),(0,2)}_J (M)_{\R}$ respectively) has a
harmonic representative in ${\mathcal Z}^{1,1}_J$
(${\mathcal Z}^{(2,0),(0,2)}_J$ respectively) with respect to the metric induced by $\omega$ and $J$, then
$J$ is pure and full (Theorem \ref{harmonic}).
We give examples of compact non-K\"ahler solvmanifolds, i.e. compact quotients of solvable Lie groups
by discrete
subgroups, satisfying the previous conditions.

In Section \ref{hardLefschetz} we prove that on a compact symplectic manifold which
satisfies the Hard
Lefschetz condition a ${\mathcal C}^{\infty}$ pure and full almost complex structure is pure and full.

An integrable almost complex structure $J$ is closed but, in general, it is not necessarily (${\mathcal C}^{\infty}$)
pure and full. We show that
a complex parallelizable manifold $(\Gamma/G, J)$ for which $H^2 (M, \R)$ is isomorphic
to the Chevalley-Eilenberg
cohomology $H^2 ({\mathfrak g})$, then $J$ is ${\mathcal C}^{\infty}$ full and it is pure (Theorem \ref{complexparal}). We
provide an example of
complex parallelizable manifold, endowed with a pure and full complex structure, namely the {\em Nakamura manifold}, for which
the de Rham cohomology is not isomorphic to the
Chevalley cohomology of the corresponding solvable Lie algebra (see \cite{Nakamura} and \cite{debaT}).

By using \cite[Proposition 1.16]{Audin} in section \ref{familyLafontaine} we provide a
family of pure and full almost
complex structures on a $3$-step nilmanifold of real dimension $4$ and on two $2$-step
solvmanifolds of real dimension $4$ and $6$.

A {\em $J$-holomorphic map} into an almost complex manifold $(M, J)$ is
a map $f: (\Sigma, j) \to (M, J)$ from a compact Riemann surface $(\Sigma, j)$ whose
differential is complex linear. Gromov-Witten invariants of a
symplectic manifold are counts of holomorphic curves with respect
to an almost complex structure compatible with the symplectic
structure (see e.g. \cite{LT}). In \cite{Lee} it was shown that, when $M$ is
$4$-dimensional, there is a natural infinite-dimensional family of
almost complex structures $J_{\alpha}$ parametrized by the
$J$-anti invariant real $2$-forms on $M$. \newline
In the last section we
construct an explicit family $J_{\alpha}$ of pure and full almost
complex structures on a $4$-dimensional nilmanifold and on a $4$-dimensional solvmanifold.

\smallskip

\noindent {\em{Acknowledgements.}} We would like to thank Tian-Jun Li and Weiyi Zhang for useful comments, remarks and
for pointing us the reference \cite{DLZ} and Daniele Angella for useful comments.\newline
We also would like to thank the referee for valuable
remarks and suggestions for a better presentation of the results.
\smallskip

\section{Calibrated almost complex structures}\label{calibrated}
Let $(V,\,\omega)$\, be a $2n$-dimensional symplectic real vector space. We recall the following

\begin{definition}
A (linear) complex structure $J$ on $(V, \omega)$\, is said to be $\omega$-{\em calibrated}\, if:
\begin{itemize}
\item[1.] $\omega(Jv,\,Jw)=\omega(v,\,w)\,,$\vskip.1truecm
\item[2.] $g_{J}(v,\,w):=\omega(v,\,Jw)$\, is positive definite,
\end{itemize}
 for every $v,\,w \in V$.
\end{definition}

Denote by ${\frak{C}}_{\omega}(V)$\, the set of $\omega$-calibrated linear complex structures on $V$.
Then, it is well known (see e.g. \cite{Audin}) that:
$${\mathfrak{C}}_{\omega}(V)\simeq{\frak{C}}(n):=Sp(n,\,{\Bbb{R}})/U(n)$$
and
$$
{\mathcal{C}}(n):=\{X\in M_{2n,2n}(\R)\,\,\,|\,\,\,X={}^{t}X,\,\,\,XJ_{n}+J_{n}X=0\}\,,
$$
where $M_{2n,2n}(\R)$ denotes the set of real matrices of order $2n$ and $J_{n}$ is the standard
complex structure
on $\R^{2n}$. Therefore,
${\frak{C}}_{\omega}(V)$\, is homeomorphic to an $(n^{2}+n)$-dimensional cell.

\begin{comment}
\n Given $J\in{\frak{C}}_{\kappa}(V)$\,, $\wedge^{*}(V):=\wedge^{*}V^{*}\otimes{\Bbb{C}}$\, is ${\Bbb{Z}}^{+}$-bigraded
with respect to the types as
$$\wedge^{*}(V)=\bigoplus_{r=0}^{2n}\wedge^{r}(V)=\bigoplus_{r=0}^{2n}\bigoplus_{p+q=r}\wedge_{J}^{p,q}(V)$$
\n and also ${\Bbb{Z}}_{2}$-graded as
\begin{equation}\label{E:z2}
\wedge^{*}(V)=\wedge^{*}_{+}(V)\oplus\wedge^{*}_{-}(V)
\end{equation}
\n according to the parity of $q$\, with respect to the $J-(p,\,q)$-bigraduation;

\n For $J\in{\frak{C}}_{\kappa}(V)$\,, let
$*\,:\, \wedge^{p,q}_{J}(V)\,\lr\,\wedge^{n-q,n-p}_{J}(V)$\,, defined by:
$$\alpha\wedge\overline{*\beta}=g_{J}(\alpha,\bar{\beta})\frac{\kappa^{n}}{n!}$$

\n we have the following, easy to prove two lemmata:
\begin{lemma}\label{L:lf}
\n we have:
\begin{itemize}
\item[1.] $*^{2}=(-1)^{p+q}I$\, on $\wedge_{J}^{p,q}(V)$
\item[2.] $*\bar{\alpha}=\overline{*\alpha}$\,, i.e. $*$\, is a ${\Bbb{C}}$-linear real operator
\item[3.] $*J=J*=\bigstar$
\item[4.] $\Lambda =-*^{-1}L*$
\end{itemize}
\end{lemma}
\end{comment}
For any integer $k\,,\,0\leq k\leq n$, let $L^{k}\,:\,\wedge^{n-k}V^{*}\,\lr\,\wedge^{n+k}V^{*}$ be
the linear map defined by
$$
L^k(\alpha) =\alpha\wedge\omega^k\,.
$$
We will need the following (see e.g. \cite[Corollary 2.7]{Yan})
\begin{lemma}\label{L:iso}
For any integer $k\,,\,0\leq k\leq n$\,\,,\,\, the map $L^{k}\,:\,\wedge^{n-k}V^{*}\,\lr\,
\wedge^{n+k}V^{*}$\,
is an isomorphism.
\end{lemma}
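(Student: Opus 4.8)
The plan is to realize $L$, together with a dual contraction operator and a degree-counting operator, as a representation of the Lie algebra $\mathfrak{sl}(2,\R)$ on the exterior algebra $\wedge^* V^*$, and then to read off the statement from the representation theory of $\mathfrak{sl}(2)$. First I would set up the three operators. Let $L\colon \wedge^p V^* \to \wedge^{p+2} V^*$ be the Lefschetz operator $L\alpha = \omega\w\alpha$, so that the map in the statement is $L^k$ restricted to $\wedge^{n-k}V^*$. Using a Darboux basis $e_1,\dots,e_n,f_1,\dots,f_n$ of $V^*$ with $\omega=\sum_{i} e_i\w f_i$, let $\omega^{\sharp}\in\wedge^2 V$ be the dual bivector and define the contraction (adjoint) operator $\Lambda\colon\wedge^p V^*\to\wedge^{p-2}V^*$ as interior multiplication by $\omega^{\sharp}$. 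Finally let $H$ act on $\wedge^p V^*$ as multiplication by the scalar $p-n$.

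Next I would verify the $\mathfrak{sl}(2)$ commutation relations
$$[H,L]=2L,\qquad [H,\Lambda]=-2\Lambda,\qquad [L,\Lambda]=H.$$
The first two are immediate from the definition of $H$ as a degree operator, since $L$ raises degree by $2$ and $\Lambda$ lowers it by $2$. The third, $[L,\Lambda]=H$, is the substantive identity: I would prove it by a local computation in the Darboux basis, checking it on monomials $e_I\w f_J$, where the combinatorics of which factors $\omega$ adds and which $\omega^{\sharp}$ removes produces exactly the eigenvalue $p-n$. This is the step I expect to be the main obstacle, since it is the only place where the nondegeneracy of $\omega$ enters and it requires careful bookkeeping of signs.

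Having established the relations, $\wedge^* V^*$ becomes a finite-dimensional $\mathfrak{sl}(2)$-module on which $H$ acts semisimply, with $\wedge^p V^*$ realized as the eigenspace of weight $p-n$. By complete reducibility it splits as a direct sum of irreducibles, and on each irreducible the structure theory of $\mathfrak{sl}(2)$-representations shows that $L^k$ carries the weight $-k$ subspace isomorphically onto the weight $+k$ subspace (on an irreducible of highest weight $m$ both subspaces are lines when $k\le m$ and $k\equiv m \pmod 2$, with $L^k$ nonzero between them, and both vanish otherwise). Since $\wedge^{n-k}V^*$ is precisely the weight $-k$ eigenspace of $H$ and $\wedge^{n+k}V^*$ the weight $+k$ eigenspace, summing over the irreducible summands yields that $L^k\colon\wedge^{n-k}V^*\to\wedge^{n+k}V^*$ is an isomorphism, as claimed.
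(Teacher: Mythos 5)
Your proof is correct: the $\mathfrak{sl}(2,\R)$-triple $(L,\Lambda,H)$ with $[L,\Lambda]=H$ verified in a Darboux basis, followed by the weight-space analysis on irreducible summands, is exactly the standard argument, and it is the one in D.~Yan's paper (Corollary 2.7 there), to which the paper defers for this lemma without giving its own proof. No gaps; the one step you flag as delicate, the identity $[L,\Lambda]=H$ on monomials, is indeed the only substantive computation.
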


\vspace{0.2cm}
Let $(M, \omega)$ be a symplectic manifold.
\begin{definition}
An almost complex structure
$J$ is said to be {\em tamed} by $\omega$ if $\omega_x (u, J u) >0$,
for any $x \in M$ and any tangent vector $u \neq 0 \in T_x M$. $J$ is called
{\em calibrated} by $\omega$ if in addition $\omega_x (Ju, Jv) = \omega_x (u, v)$, for any pair
of tangent vectors $u$ and $v$.
\end{definition}

For a fixed non-degenerate closed $2$-form $\omega$ on $\R^{2n}= \C^n$, denote by
${\mathcal J}_c(\omega)$ (resp. ${\mathcal J}_t (\omega)$) the set of almost-complex structures
calibrated (respectively tamed) by $\omega$.

By \cite[Proposition 1.16]{Audin}, if on $\C^n$ one considers the canonical standard
symplectic structure $(J_0, \omega)$, then the map
$$
J \mapsto (J + J_0)^{-1} \circ (J - J_0)
$$
is a diffeomorphism from ${\mathcal J}_t (\omega)$ (resp. ${\mathcal J}_c(\omega)$) onto the
open unit ball in the vector space of matrices (resp. symmetric matrices) $L$ such that $J_0 L = - L J_0$.

Indeed, if $L$ is matrix such that $\vert \vert L \vert \vert < 1$, then the endomorphism
$$
J_0 \circ (I+ L) \circ (I - L)^{-1}\,,
$$
where $I$ is the identity matrix, is an almost complex
structure if and only if $J_0 L = - LJ_0$ and it is tamed by $\omega$.
Moreover, the almost complex structure
$$
J_0 \circ (I + L) \circ (I - L)^{-1}
$$
is calibrated if and only if $L$ is symmetric.

Then, if $J_0$ is a almost complex structure calibrated by $\omega$ and $L$ is a symmetric matrix
such that $\vert \vert L \vert \vert < 1$, $J_0 L = - L J_0$, then the new endomorphism
$$
(I + L) \circ J_0 \circ (I + L)^{-1}
$$
is still an almost complex structure calibrated by $\omega$.

Let again $(M,\,\omega)$\, be a $2n$-dimensional symplectic manifold and let $Sp_{\omega}(M)$\, be
the principal $Sp(n,\,{\Bbb{R}})$-bundle of symplectic frames on $M$\,; then $Sp_{\omega}(M)/U(n)$\, is
the bundle of $\omega$-calibrated almost complex structures on $M$. Since $Sp_{\omega}(M)$ has
contractible fiber, there exist
 many global sections. Denote by ${\frak{C}}_{\omega}(M)$\, the space of such sections.

Given an $\omega$-calibrated almost complex structure $J\in {\frak{C}}_{\omega}(M)$, the space $\Omega^k (M)$
of complex smooth differential
$k$-forms has a type decomposition:
$$
\Omega^k (M) = \bigoplus_{p + q = k} \Omega^{p,q}_J (M),
$$
where $\Omega^{p,q}_J (M)$ denotes the space of complex forms of type $(p, q)$ with respect to $J$.
We have that:
$$
d\,:\,\Omega_{J}^{p,q}(M)\,\lr\,\Omega_{J}^{p+2,q-1}(M)\oplus\Omega_{J}^{p+1,q}(M)
\oplus\Omega_{J}^{p,q+1}(M)\oplus
\Omega_{J}^{p-1,q+2}(M)$$
\n and so $d$\, splits accordingly as
$$
d=A_{J}+\partial_{J}+\bar{\partial}_{J}+\bar{A}_{J}
$$
\n where all the pieces are graded algebra derivations, $A_{J},\,\bar{A}_{J}$\,are $0$-order
differential operators,
and, in particular, for $\alpha\in\Omega^{1}(M)$\, we have
$$
(A_{J}(\alpha)+\bar{A}_{J}(\alpha))(X,\,Y)=\frac{1}{4}\alpha(N_{J}(X,\,Y))\,,
$$
\n $N_{J}$\, being the Nijenhuis tensor of $J$\,. From $d^{2}=0$\, we obtain
$$
%\begin{equation}\label{E:cxi}
\left\{
\begin{aligned}
& A_{J}^{2}=0\,,\\
& A_{J}\partial_{J}+\partial_{J}A_{J}=0\,,\\
& \partial_{J}^{2}+A_{J}\bar{\partial}_{J}+\bar{\partial}_{J}A_{J}=0\,,\\
& \partial_{J}\bar{\partial}_{J}+\bar{\partial}_{J}\partial_{J}+A_{J}\bar{A}_{J}+\bar{A}_{J}A_{J}=0\,,\\
& \bar{\partial}_{J}^{2}+\bar{A}_{J}\partial_{J}+\partial_{J}\bar{A}_{J}=0\,,\\
& \bar{A}_{J}\bar{\partial}_{J}+\bar{\partial}_{J}\bar{A}_{J}=0\,,\\
& \bar{A}_{J}^{2}=0\,.
\end{aligned}
\right.
%\end{equation}
$$
If $N_J =0$, then $J$ is called {\em integrable} and $(\omega, J)$ defines a K\"ahler structure. In general

\begin{definition}
An {\em almost-K\"ahler structure} on $2n$-dimensional manifold $M$ is a pair $(\omega, J)$ where $\omega$ is a
symplectic form and $J$ is an almost complex structure calibrated by $\omega$.
\end{definition}
If $(M,\omega,J)$ is an almost-K\"ahler manifold, then
$$
g(X,Y ) = \omega(X,JY )
$$
is a $J$-Hermitian metric, i.e. $g(JX, JY) = g (X, Y)$, for any $X. Y$.
\section{Pure and full almost complex structures and differential forms}\label{maintheorems}
Let $J$ be a smooth almost complex structure on a compact $2n$-dimensional manifold $M$.

The space $\Omega^k (M)_{\R}$ of real smooth differential $k$-forms has a type decomposition:
$$
\Omega^k (M)_{\R} = \bigoplus_{p + q = k} \Omega^{p,q}_J (M)_{\R},
$$
where
$$
\Omega^{p,q}_J (M)_{\R} = \left\{ \alpha \in \Omega^{p,q}_J (M) \oplus \Omega^{q,p}_J (M)
\, \vert \, \alpha = \overline \alpha \right\}\,.
$$

For a finite set $S$ of pairs of integers, let
$$
{\mathcal Z}^{S} _J = \displaystyle{\bigoplus_{(p, q) \in S}} {\mathcal Z}^{p,q}_J, \quad
{\mathcal B}^{S} _J = \bigoplus_{(p, q) \in S} {\mathcal B}^{p,q}_J,
$$
where the spaces ${\mathcal Z}^{p,q}_J$ and ${\mathcal B}^{p,q}_J$
are respectively the space of real $d$-closed $(p,q)$-forms and
those one of $d$-exacts $(p,q)$-forms. There is a natural map
$$
\rho_S: {\mathcal Z}^{S} _J/{\mathcal B}^{S} _J \to {\mathcal Z}^{S} _J/{\mathcal B}
$$
where $\mathcal B$ is the space of $d$-exact forms. As in \cite{LIZhang} we will write
$\rho_S ({\mathcal Z}^{S} _J/{\mathcal B}^{S} _J)$ simply as ${\mathcal Z}^{S} _J/{\mathcal B}^{S} _J$
and we may define the cohomology spaces
$$
H^S_J (M)_{\R}= \left\{ [\alpha ] \,\, \vert \, \,\alpha \in {\mathcal Z}^{S} _J \right\} =
\frac {{\mathcal Z}^{S} _J } {\mathcal B}.
$$
Then there is a natural inclusion
$$
H^{1,1}_J (M)_{\R} + H^{(2,0),(0,2)}_J (M)_{\R} \subseteq
H^{2} (M, \R).
$$
As in \cite[Definition 4.12]{LIZhang} we set the following
\begin{definition}
A smooth almost complex structure $J$ on $M$ is said to be ${\mathcal C}^{\infty}$ {\em pure and full} if
$$
H^{2} (M, \R) =H^{1,1}_J (M)_{\R} \oplus H^{(2,0),(0,2)}_J (M)_{\R}.
$$
\end{definition}
In particular $J$ is ${\mathcal C}^{\infty}$ pure if and only if
$$
\frac{{\mathcal Z}^{1,1}_J}{{\mathcal B}^{1,1}_J} \cap
\frac{{\mathcal Z}^{(2,0),(0,2)}_J}{{\mathcal B}^{(2,0),(0,2)}_J} =0
$$
and $J$ is ${\mathcal C}^{\infty}$ full if and only if
$$
\frac{{\mathcal Z}^2}{{\mathcal B}^2} = \frac{{\mathcal Z}^{1,1}_J}{{\mathcal B}^{1,1}_J} +
\frac{{\mathcal Z}^{(2,0),(0,2)}_J}{{\mathcal B}^{(2,0),(0,2)}_J },
$$
where ${\mathcal Z}^{2}$ and ${\mathcal B}^{2}$ denote
respectively the space of $2$-forms which are $d$-closed and
exact. Let $\pi_{1,1}: \Omega^2 (M)_{\R} \to \Omega^{1,1}_J
(M)_{\R}$ be the natural projection. If $J$ is ${\mathcal
C}^{\infty}$ pure and full, then the natural homomorphism
$$
\frac {{\mathcal Z}^{1,1}_J} {{\mathcal B}^{1,1}_J} \to
\frac{\pi_{1,1} {\mathcal Z}^{2}} {\pi_{1,1} {\mathcal B}^{2}}
$$
is an isomorphism
(see \cite[Lemma 4.9]{LIZhang}).
\begin{prop} \label{co-closed} Let $\omega$ be a symplectic form on a $2n$-dimensional compact manifold $M$. If $J$
is an almost
complex structure on $M$ calibrated by $\omega$, then $J$ is ${\mathcal C}^{\infty}$ pure. \end{prop}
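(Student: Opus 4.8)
The plan is to unwind the definition of $\mathcal{C}^{\infty}$ purity and reduce it to a positivity statement detected by the symplectic volume. By definition $J$ is $\mathcal{C}^{\infty}$ pure exactly when $H^{1,1}_J(M)_{\R}\cap H^{(2,0),(0,2)}_J(M)_{\R}=0$ inside $H^2(M,\R)$. So I would fix a class $\mathfrak{e}$ in this intersection and choose closed representatives of both types: a closed $J$-invariant form $\alpha\in{\mathcal Z}^{1,1}_J$ and a closed $J$-anti-invariant form $\beta\in{\mathcal Z}^{(2,0),(0,2)}_J$ with $\mathfrak{e}=[\alpha]=[\beta]$, so that $\alpha-\beta=d\gamma$ for some $1$-form $\gamma$. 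The goal is then to prove $\beta=0$, equivalently $\mathfrak{e}=0$. Only the case $n\ge 2$ is substantial, since for $n=1$ there are no nonzero forms of type $(2,0)$ and purity is automatic; thus I may form $\omega^{n-2}$.

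The heart of the argument is to evaluate the same top-degree integral in two ways. First, using the mixed wedge $\alpha\wedge\beta\wedge\omega^{n-2}$: because $\alpha$ has type $(1,1)$ and $\beta$ has type $(2,0)+(0,2)$, the product lies in bidegrees $(n+1,n-1)$ and $(n-1,n+1)$ and hence vanishes identically, giving $\int_M\alpha\wedge\beta\wedge\omega^{n-2}=0$. Second, I substitute $\alpha=\beta+d\gamma$ into one factor of $\int_M\beta\wedge\beta\wedge\omega^{n-2}$; since $d\beta=0$ and $d\omega=0$, the exact term is $\int_M\beta\wedge d\gamma\wedge\omega^{n-2}=\int_M d(\beta\wedge\gamma\wedge\omega^{n-2})=0$ by Stokes' theorem. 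Combining the two computations yields $\int_M\beta\wedge\beta\wedge\omega^{n-2}=\int_M\alpha\wedge\beta\wedge\omega^{n-2}=0$.

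Finally I bring in the calibration hypothesis, which is where positivity enters. Writing $\beta=\beta^{2,0}+\beta^{0,2}$ with $\beta^{0,2}=\overline{\beta^{2,0}}$, only the cross term survives the wedge with $\omega^{n-2}$ for type reasons, so $\beta\wedge\beta\wedge\omega^{n-2}=2\,\beta^{2,0}\wedge\overline{\beta^{2,0}}\wedge\omega^{n-2}$. Since $J$ is calibrated by $\omega$, the induced metric $g(\cdot,\cdot)=\omega(\cdot,J\cdot)$ is positive definite, and forms of type $(2,0)$ are automatically primitive (as $\Lambda$ would send them to bidegree $(1,-1)$); the pointwise Hodge--Riemann bilinear relation then reads $\beta^{2,0}\wedge\overline{\beta^{2,0}}\wedge\frac{\omega^{n-2}}{(n-2)!}=|\beta^{2,0}|^2\,\frac{\omega^n}{n!}$, a nonnegative multiple of the volume form that vanishes exactly where $\beta^{2,0}=0$. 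Integrating, $0=\int_M\beta\wedge\beta\wedge\omega^{n-2}$ forces $\beta^{2,0}\equiv 0$, hence $\beta=0$ and $\mathfrak{e}=0$, proving purity.

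I expect the only genuine obstacle to be fixing the sign and normalization in the Hodge--Riemann relation so that the integrand is honestly nonnegative (a pointwise linear-algebra computation in a $g$-unitary coframe); the vanishing of the mixed integral is a pure type count, and the passage from $\alpha\wedge\beta$ to $\beta\wedge\beta$ is just Stokes' theorem. It is precisely the positive definiteness of $g$, i.e. the calibration condition, that converts the cohomological vanishing into the vanishing of $\beta$ itself.
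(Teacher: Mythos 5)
Your proof is correct, and it takes a genuinely different route from the paper's. The paper works on the $(1,1)$ representative $\alpha$: it computes the $L^2$-product $(\alpha,\omega)=(\beta+d\gamma,\omega)=(\gamma,d^*\omega)=0$ using that $\omega$ is coclosed for the almost-K\"ahler metric, deduces that $\alpha$ is primitive, and concludes $\alpha=0$ from the Lefschetz isomorphism of Lemma \ref{L:iso}, with a separate argument at $n=2$ via the self-dual/anti-self-dual splitting and harmonicity of $\alpha$ and $\beta$. You instead kill the anti-invariant representative $\beta$: the vanishing $\int_M\alpha\wedge\beta\wedge\omega^{n-2}=0$ is a pure type count, Stokes' theorem converts it into $\int_M\beta\wedge\beta\wedge\omega^{n-2}=0$, and the pointwise Hodge--Riemann relation for the primitive $(2,0)$-form $\beta^{2,0}$ identifies this integral with $2(n-2)!\int_M|\beta^{2,0}|^2\,\omega^n/n!$, forcing $\beta=0$. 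The sign you flagged as the only risk does come out right: the Weil identity gives $*\sigma=\omega^{n-2}\wedge\sigma/(n-2)!$ for a $(2,0)$-form $\sigma$, hence $\sigma\wedge\bar\sigma\wedge\omega^{n-2}/(n-2)!=|\sigma|^2\,\omega^n/n!\geq 0$, and this is precisely where the calibration (positive definiteness of $g=\omega(\cdot,J\cdot)$) enters. Your version buys uniformity in $n$ (no case split at $n=2$), uses only pointwise linear algebra plus Stokes rather than $d^*\omega=0$ or harmonic theory, and is arguably more robust, since the paper's passage from the integrated orthogonality $(\alpha,\omega)=0$ to pointwise primitivity of $\alpha$ requires an additional justification that your argument never needs.
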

\begin{proof} Let $a\in
\frac{{\mathcal Z}^{1,1}_J}{{\mathcal B}^{1,1}_J} \cap
\frac{{\mathcal Z}^{(2,0),(0,2)}_J}{{\mathcal B}^{(2,0),(0,2)}_J}$. Then $a=[\alpha]=[\beta]$, where
$\alpha\in{\mathcal Z}^{1,1}_J$, $\beta\in{\mathcal Z}^{(2,0),(0,2)}_J$ respectively. We have to show
that $a=0$. By $[\alpha]=[\beta]$, it follows that $\alpha =\beta + d\gamma$, for
$\gamma\in\Omega^1 (M)_{\R}$. \newline
If $(\, , \, )$ denotes the $L^2$-product on $\Omega^k (M)_{\R}$, then we get:
\begin{equation}\label{orthomega}
(\alpha,\omega)=(\beta + d\gamma,\omega)=(d\gamma,\omega)=(\gamma,d^*\omega)=0\,,
\end{equation}
where $*$ is the star Hodge operator with respect to the Riemannian metric associated with
$(\omega,J)$. The $2$-form $\omega$ determines an $U(n)$-equivariant map
$$
L: \Omega^{p-1,q-1}_J(M) \to \Omega^{p,q}_J (M),
$$
and the orthogonal decomposition
$$
\Omega^{p,q}_J (M) = {\Omega_0}^{p,q}_J (M) \oplus L( \Omega^{p-1,q-1}_J (M))\,,
$$
where ${\Omega_0}^{p,q}_J (M)$ denotes the space of primitive forms of type $(p,q)$, i.e. the
space of $(p, q)$-forms $\beta$ such that $\beta \wedge \omega =0$.
In particular
$$
\Omega^{2} (M)_{\R} = {\Omega_0}^{1,1}_J (M)_{\R} \oplus < \omega> \oplus \,
{\Omega}^{(2,0),(0,2)}_J (M)_{\R}.
$$
Since $\alpha$ is a real form of type $(1,1)$ and by \eqref{orthomega} it is
orthogonal to $\omega$, it follows that $\alpha$ is primitive and, consequently, if $n > 2$
$$
\alpha\wedge\omega^{n-1} =0\,.
$$
Then, by Lemma \ref{L:iso}, it follows that $\alpha =0$. Hence $a=0$.

If $n =2$ we have that the spaces $ {\Omega_0}^{1,1}_J (M)_{\R} $ and
$< \omega> \oplus {\Omega}^{(2,0),(0,2)}_J (M)_{\R}$ are respectively the spaces
of self-dual and anti-self-dual $2$-forms.

Therefore, both $\alpha$ and $\beta$ are harmonic forms. By the assumption,
$$
[\alpha-\beta]=0\,,
$$
and by the harmonicity of $\alpha$ and $\beta$, we get $\alpha=\beta$. Therefore, $\alpha =\beta =0$, i.e.
$a=0$.
\end{proof}

By the last proposition, we obtain at once that, if $(\omega, J)$
is an almost-K\"ahler structure then $J$ is ${\mathcal
C}^{\infty}$ pure. This result has been also proved in \cite{DLZ}.

By \cite[Theorem 2.3]{DLZ} on a compact manifold of real dimension $4$ any almost complex structure
is ${\mathcal C}^{\infty}$ pure and full. We will show in the next example that a compact
manifold of real dimension $6$ may admit non ${\mathcal C}^{\infty}$ pure almost structures.

\begin{ex}
{\rm Consider the $6$-dimensional nilmanifold $M$, compact quotient of the $6$-dimensional real nilpotent
Lie group with structure equations
$$
\left\{
\begin{array}{l}
d e^j = 0, \qquad j = 1, \ldots, 4,\\[5pt]
d e^5= e^{12},\\[5pt]
d e^6 =e^{13},
\end{array}
\right.
$$
by a uniform discrete subgroup. The left-invariant almost complex
structure on $M$, defined by the $(1,0)$-forms
$$
\eta^1 = e^1 + i e^2, \quad \eta^2 = e^3 + i e^4, \quad \eta^3 =
e^5 + i e^6,
 $$
is not ${\mathcal C}^{\infty}$ pure, since one has that
$$[{\mbox {Re}} (\eta^1\wedge \overline \eta^2) ]=[e^{13} + e^{24}] =
 [e^{24}]= [{\mbox {Re}} (\eta^1\wedge \eta^2)] = [e^{13} - e^{24}].
 $$
}
\end{ex}

\smallskip

If $(\omega, J)$ is a K\"ahler structure, then of course $J$ is
${\mathcal C}^{\infty}$ pure and full. Therefore, the interesting case is
to find examples of ${\mathcal C}^{\infty}$ full and pure almost complex structures, not associated
to K\"ahler structures.
\begin{theorem} \label{relwithinvforms} If $J$ is a ${\mathcal C}^{\infty}$ full and pure almost
complex structure on a
compact quotient $M = \Gamma \backslash G$, where $\Gamma$ is a uniform
discrete subgroup of a Lie group $G$, such
that the de Rham cohomology $H^2(M, \R)$ of $M$ is isomorphic to the Chevalley-Eilenberg cohomology
$H^2 ({\mathfrak g} )$
of the Lie algebra ${\mathfrak g}$ of $G$, then the following isomorphisms hold:
$$
H^{1,1}_J (M)_{\R} \cong \frac{\pi_{1,1}{\mathcal Z}^2_{inv}} {\pi_{1,1} {\mathcal B}^2} ,
\quad H^{(2,0),(0,2)}_J (M)_{\R} \cong \frac {\pi_{(2,0),(0,2)} {\mathcal Z}^2_{inv} } {\pi_{(2,0),(0,2)} {\mathcal B}^2},
$$
where ${\mathcal Z}^{2}_{inv}$ denotes
the space of closed
left-invariant $2$-forms on $M$.
\end{theorem}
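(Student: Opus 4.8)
The plan is to factor the two isomorphisms through the ``projected'' description of $H^{1,1}_J(M)_\R$ and $H^{(2,0),(0,2)}_J(M)_\R$ that is valid for any $\mathcal C^\infty$ pure and full $J$, and then to invoke the hypothesis $H^2(M,\R)\cong H^2(\mathfrak g)$ in order to restrict attention to left-invariant closed forms. By the very definition of the spaces involved, $H^{1,1}_J(M)_\R=\mathcal Z^{1,1}_J/\mathcal B^{1,1}_J$ (a $d$-exact $2$-form being automatically closed, so that $\mathcal Z^{1,1}_J\cap\mathcal B=\mathcal B^{1,1}_J$), and similarly for the anti-invariant bidegrees. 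Since $J$ is $\mathcal C^\infty$ pure and full, I would then apply \cite[Lemma 4.9]{LIZhang} to obtain the natural isomorphisms
\[
H^{1,1}_J(M)_\R\cong\frac{\pi_{1,1}\mathcal Z^2}{\pi_{1,1}\mathcal B^2},\qquad H^{(2,0),(0,2)}_J(M)_\R\cong\frac{\pi_{(2,0),(0,2)}\mathcal Z^2}{\pi_{(2,0),(0,2)}\mathcal B^2}.
\]
It then remains only to see that in each quotient the numerator may be computed using left-invariant closed forms.

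Here is where the standing hypothesis enters. The assumed isomorphism $H^2(M,\R)\cong H^2(\mathfrak g)$ says exactly that the inclusion of left-invariant forms induces a surjection in degree-$2$ de Rham cohomology, so that every closed $2$-form $\sigma$ admits a decomposition $\sigma=\sigma_{inv}+d\gamma$ with $\sigma_{inv}\in\mathcal Z^2_{inv}$ and $\gamma\in\Omega^1(M)_\R$. The essential subtlety is that $J$ is in general non-integrable, so the projection $\pi_{1,1}$ does not commute with $d$; consequently $\pi_{1,1}\sigma_{inv}$ need not be closed, and this is precisely why the statement must be phrased through the projected space $\pi_{1,1}\mathcal Z^2_{inv}$ rather than through closed invariant $(1,1)$-forms. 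Applying the linear operator $\pi_{1,1}$ to the decomposition gives $\pi_{1,1}\sigma=\pi_{1,1}\sigma_{inv}+\pi_{1,1}(d\gamma)$, whence
\[
\pi_{1,1}\mathcal Z^2=\pi_{1,1}\mathcal Z^2_{inv}+\pi_{1,1}\mathcal B^2,
\]
and identically with $\pi_{(2,0),(0,2)}$ in place of $\pi_{1,1}$.

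Finally I would pass to the quotients. Composing the inclusion $\pi_{1,1}\mathcal Z^2_{inv}\hookrightarrow\pi_{1,1}\mathcal Z^2$ with the projection modulo $\pi_{1,1}\mathcal B^2$ gives, by the identity above, a surjection onto $\pi_{1,1}\mathcal Z^2/\pi_{1,1}\mathcal B^2$ whose kernel is $\pi_{1,1}\mathcal Z^2_{inv}\cap\pi_{1,1}\mathcal B^2$; the isomorphism theorem then yields
\[
\frac{\pi_{1,1}\mathcal Z^2_{inv}}{\pi_{1,1}\mathcal B^2}\cong\frac{\pi_{1,1}\mathcal Z^2}{\pi_{1,1}\mathcal B^2}\cong H^{1,1}_J(M)_\R,
\]
where, following the convention of \cite{LIZhang}, the denominator on the left is read as $\pi_{1,1}\mathcal Z^2_{inv}\cap\pi_{1,1}\mathcal B^2$; the same computation with $\pi_{(2,0),(0,2)}$ gives the second isomorphism. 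I expect the only real difficulty to be organizational: keeping the two hypotheses cleanly separated---pure-and-fullness is used solely in the appeal to \cite[Lemma 4.9]{LIZhang}, while $H^2(M,\R)\cong H^2(\mathfrak g)$ is used solely to manufacture invariant representatives---and correctly interpreting the right-hand quotient, since in general $\pi_{1,1}\mathcal B^2$ is not contained in $\pi_{1,1}\mathcal Z^2_{inv}$.
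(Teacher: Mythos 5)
Your argument is correct and follows essentially the same route as the paper's proof: the hypothesis $H^2(M,\R)\cong H^2(\mathfrak g)$ supplies left-invariant representatives, and $\mathcal C^\infty$ pure-and-fullness via \cite[Lemma 4.9]{LIZhang} identifies $H^{1,1}_J(M)_\R$ and $H^{(2,0),(0,2)}_J(M)_\R$ with the projected quotients. Your closing remark on reading the denominator as $\pi_{1,1}\mathcal Z^2_{inv}\cap\pi_{1,1}\mathcal B^2$ is in fact a point the paper glosses over, and is the right interpretation.
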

\begin{proof} By the assumption on the de Rham cohomology of $M$, we have
$$
H^{2} (M, \R) = \frac {{\mathcal Z}^2} {{\mathcal B}^2} \cong
H^{2}_{inv} (M, \R) = \frac {{\mathcal Z}^2_{inv}} {{\mathcal B}^2_{inv}} \,,
$$
where $H^{2}_{inv} (M, \R)\cong H^2 ({\mathfrak g} )$ is the
cohomology of complex of left-invariant $2$-forms on $M$. Let $[\alpha] \in H^{1,1}_J (M)_{\R}$, then
$[\alpha] = [\tilde \alpha]$, with $\tilde \alpha$ a left-invariant closed $2$-form. Therefore
$$
 \alpha = \pi_{1, 1} \tilde \alpha + \pi_{1,1} d \gamma,
 $$
 with $\tilde \alpha$ a left-invariant form closed $2$-form
i.e. $[\alpha] = [\pi_{1,1} \tilde \alpha]$ in $ \pi_{1,1} {\mathcal Z}^2_{inv}/ \pi_{1,1} {\mathcal B}^2$ and similarly
for an element of $H^{(2,0), (0,2)}_J (M)_{\R}$.
Since
$J$ is ${\mathcal C}^{\infty}$ pure and full we have that
$$H^{1,1}_J (M)_{\R} = \frac {{\mathcal Z}^{1,1}_J}
{{\mathcal B}^{1,1}_J} \cong \frac {\pi_{1,1} {\mathcal Z}^2} { \pi_{1, 1}{\mathcal B}^2}, \quad H^{(2,0),(0,2)}_J (M)_{\R} =
\frac {{\mathcal Z}^{(2,0), (0,2)}_J}
{{\mathcal B}^{(2,0), (0,2)}_J} \cong \frac {\pi_{(2,0), (0,2)} {\mathcal Z}^2} { \pi_{(2,0), (0,2)}{\mathcal B}^2}.
$$
Then the theorem follows by
$$
H^{2} (M, \R) = \frac {\pi_{1,1} {\mathcal Z}^2_{inv}} {\pi_{1,1} {\mathcal B}^2} \oplus \frac {\pi_{(2,0),(0,2)}
{\mathcal Z}^2_{inv}} {\pi_{(2,0),(0,2)} {\mathcal B}^2}.
$$
\end{proof}
The previous assumption on the de Rham cohomology of $\Gamma
\backslash G$ is not so restrictive, since it is satisfied when $G$ is completely solvable, so, in particular, if $G$ is
nilpotent.

The complex of currents is dual to the complex of forms and vice versa. Since the smooth $k$-forms
can be considered as $(2n-k)$-currents, the $k$-th de Rham homology group
$H_k(M, \R)$ is isomorphic to the $(2n - k)$-th de Rham cohomology group $H^{2n- k} (M, \R)$
of $M$ (see \cite{dR}).\newline
Moreover, a $k$-current is a boundary if and only if it vanishes on the space of closed $k$-forms.

Indeed, on an almost complex manifold $(M, J)$ the space of real $k$-currents ${\mathcal E}_k (M)_{\R}$
has a decomposition:
$${\mathcal E}_k (M)_{\R} = \bigoplus_{p + q =k} {\mathcal E}_{p,q}^J (M)_{\R},$$
where ${\mathcal E}_{p,q}^J (M)_{\R}$ denotes the space of real $k$-currents of bidimension $(p,q)$.

If $S$ is a finite set of pairs of integers, then let
$$
{\mathcal Z}_{S} ^J = \bigoplus_{(p, q) \in S} {\mathcal Z}_{p,q}^J, \quad {\mathcal B}_{S} ^J =
\bigoplus_{(p, q) \in S} {\mathcal B}_{p,q}^J,
$$
where the spaces ${\mathcal Z}_{p,q}^J$ and ${\mathcal B}_{p,q}^J$ are respectively the
space of real closed bidimension $(p,q)$ currents and the one of real exact bidimension $(p,q)$
currents. Let, as in
\cite{LIZhang},
$$
H_S^J (M)_{\R}= \{ [\alpha ] \, \vert \, \alpha \in {\mathcal Z}_{S}^J \} = \frac {{\mathcal Z}_{S} ^J }
{\mathcal B},
$$
where $\mathcal B$ denotes the space of currents which are boundaries.

Denote by ${\mathcal Z}_{2}$ and ${\mathcal B}_{2}$ respectively the space
of real $2$-currents which are closed and boundaries.

We recall the following (see \cite[Definitions 4.3 and
4.4]{LIZhang})
\begin{definition} An almost complex structure $J$ is said to be {\em pure}
if
$\frac {{\mathcal Z}_{1,1} ^J }{{\mathcal B}_{1,1} ^J } \cap \frac {{\mathcal Z}_{(2,0),(0,2)} ^J }
{{\mathcal B}_{(2,0),(0,2)} ^J } =0$, or
equivalently, if and only
if $\pi_{1,1} {\mathcal B}_2 \cap {{\mathcal Z}_{1,1} ^J } = {\mathcal B}_{1,1}^J$.

$J$ is said to be {\em full} if
$$
\frac {{\mathcal Z}_{2}} {{\mathcal B}_{2}}= \frac {{\mathcal Z}_{1,1} ^J }{{\mathcal B}_{1,1} ^J }
 + \frac {{\mathcal Z}_{(2,0),(0,2)} ^J } {{\mathcal B}_{(2,0),(0,2)} ^J }.
$$
\end{definition}

Therefore, an almost complex structure $J$ is pure and full if and only if
\begin{equation} \label{fullforcurrents}
H_{2} (M, \R) =H_{1,1}^J (M)_{\R} \oplus H_{(2,0),(0,2)}^J (M)_{\R},
\end{equation}
where $H_{2} (M, \R)$ is the $2$-nd de Rham homology group. In \cite{LIZhang} a link between
the two notions of pure
and full in terms of differential
forms and currents was found.
\begin{definition} An almost complex structure $J$ is said to be ${\mathcal C}^{\infty}$ {\em closed} if the image of
the operator
$$
d_{1,1}: \Omega^{1,1}_J(M)_{\R} \to \Omega^3(M)_{\R}
$$
is closed.
\end{definition}

By \cite{LIZhang} an almost complex structure $J$ is closed if and only if $J$ is ${\mathcal C}^{\infty}$ closed.
Moreover, in \cite{LIZhang} relations between the spaces $H^{1,1}_J (M)_{\R}$ and $H_{1,1}^J (M)_{\R}$ have been determined.

If a $2$-form $\omega$ on a $2n$-dimensional manifold is not necessarily closed but it is only
non-degenerate, the manifold $(M, \omega)$ is called {\em {almost symplectic}} manifold.
We can prove the following
\begin{theorem} \label{harmonic}
Let $(M, \omega)$ be an almost symplectic $2n$-dimensional compact manifold and
$J$ be a ${\mathcal C}^{\infty}$
pure and full almost complex structure calibrated by $\omega$. Then $J$ is pure. \newline
If, in addition, either $n = 2$ or if any
cohomology class in $H^{1,1}_J (M)_{\R}$ ($H^{(2,0),(0,2)}_J (M)_{\R}$ respectively) has a
harmonic representative in ${\mathcal Z}^{1,1}_J$
(${\mathcal Z}^{(2,0),(0,2)}_J$ respectively) with respect to the metric induced by $\omega$ and $J$, then
$J$ is pure and full.
\end{theorem}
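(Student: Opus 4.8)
The plan is to exploit de Rham duality between the complex of forms and the complex of currents, together with the hypothesis that $J$ is $\mathcal{C}^\infty$ pure and full. Recall from the discussion preceding the statement that on the compact manifold $M$ a $2$-current is a boundary precisely when it annihilates every closed $2$-form, so the de Rham pairing $([T],[\phi])\mapsto T(\phi)$ between $H_2(M,\R)$ and $H^2(M,\R)$ is well defined (boundaries pair trivially with closed forms, closed currents with exact forms) and nondegenerate, whence $\dim H_2(M,\R)=\dim H^2(M,\R)$. The feature I would use repeatedly is that this pairing is \emph{type-orthogonal}: a current of bidimension $(1,1)$ annihilates every form of bidegree $(2,0)$ or $(0,2)$, and a current of bidimension $(2,0)$ or $(0,2)$ annihilates every $(1,1)$-form.

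For the purity claim, let $c\in\frac{\mathcal{Z}_{1,1}^J}{\mathcal{B}_{1,1}^J}\cap\frac{\mathcal{Z}_{(2,0),(0,2)}^J}{\mathcal{B}_{(2,0),(0,2)}^J}$, so $c=[T_1]=[T_2]$ with $T_1$ a closed current of bidimension $(1,1)$ and $T_2$ a closed current of bidimension type $(2,0)+(0,2)$. I would show that $c$ pairs trivially with every closed $2$-form $\phi$, which by the boundary criterion forces $c=0$. Using that $J$ is $\mathcal{C}^\infty$ pure and full I split $[\phi]=[\phi_{1,1}]+[\phi_{(2,0),(0,2)}]$ into closed representatives of pure type; then $T_1(\phi_{(2,0),(0,2)})=0$ since $T_1$ has bidimension $(1,1)$, while $T_1(\phi_{1,1})=T_2(\phi_{1,1})=0$ because $[T_1]=[T_2]$ and $T_2$ annihilates $(1,1)$-forms. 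Hence $T_1(\phi)=0$ for every closed $\phi$, so $c=0$ and $J$ is pure. Note that this step uses only $\mathcal{C}^\infty$ pureness-and-fullness and duality, not yet the calibration.

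For fullness in the general case I would bring in the metric $g(\cdot,\cdot)=\omega(\cdot,J\cdot)$ supplied by the calibration, with its $L^2$ product, Hodge star $*$ (which maps $\Omega^{p,q}_J\to\Omega^{n-q,n-p}_J$) and harmonic theory. To a harmonic representative $\alpha\in\mathcal{Z}^{1,1}_J$ of a class in $H^{1,1}_J(M)_\R$ I attach the dimension-$2$ current $T_\alpha(\psi)=(\alpha,\psi)$, i.e. the current carried by the $(n-1,n-1)$-form $*\alpha$; since $\alpha$ is co-closed the current $T_\alpha$ is closed, and it has bidimension $(1,1)$, so $[T_\alpha]\in H_{1,1}^J(M)_\R$. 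This assignment is well defined and linear by uniqueness of the harmonic representative, and it is injective, because if $T_\alpha$ is a boundary then $T_\alpha(\alpha)=\|\alpha\|^2=0$ (as $\alpha$ is closed), forcing $\alpha=0$. The identical construction on $\mathcal{Z}^{(2,0),(0,2)}_J$ gives an injection $H^{(2,0),(0,2)}_J(M)_\R\hookrightarrow H_{(2,0),(0,2)}^J(M)_\R$. Combining these with purity (which says the two current subspaces meet only in $0$) and with $\dim H_2(M,\R)=\dim H^2(M,\R)=\dim H^{1,1}_J(M)_\R+\dim H^{(2,0),(0,2)}_J(M)_\R$, a dimension count forces $H_{1,1}^J(M)_\R+H_{(2,0),(0,2)}^J(M)_\R=H_2(M,\R)$, i.e. $J$ is full.

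For $n=2$ no harmonic hypothesis is assumed, and there I would instead use that a $2$-form is already a current of dimension $2n-2=2$: the wedge-integration map $\alpha\mapsto T_\alpha$, $T_\alpha(\psi)=\int_M\alpha\wedge\psi$, sends closed forms to closed currents, is type-preserving (type $(1,1)$ to bidimension $(1,1)$, type $(2,0)+(0,2)$ to bidimension $(2,0),(0,2)$), and induces the Poincaré duality isomorphism $H^2(M,\R)\xrightarrow{\sim}H_2(M,\R)$; transporting $H^2(M,\R)=H^{1,1}_J(M)_\R\oplus H^{(2,0),(0,2)}_J(M)_\R$ through it yields fullness at once. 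The main obstacle, and the reason the two regimes must be separated, is exactly this passage from forms to dimension-$2$ currents: when $n>2$ a $2$-form only produces a $(2n-2)$-current, so one must route through $*$, and closedness of the resulting current becomes co-closedness (harmonicity) of the form — which is precisely what the harmonic-representative hypothesis provides — whereas for $n=2$ the difficulty disappears.
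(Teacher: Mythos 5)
Your proof is correct, and both your purity argument and your treatment of the $n=2$ case coincide with the paper's. For fullness when $n>2$ the underlying mechanism is also the same --- the harmonic-representative hypothesis is used precisely so that the Hodge star turns a harmonic form of type $(1,1)$ (resp. $(2,0)+(0,2)$) into a closed $(2n-2)$-form, i.e. a closed current of bidimension $(1,1)$ (resp. $(2,0),(0,2)$) --- but the logical packaging differs: the paper argues surjectively, representing an arbitrary class $[T]\in H_2(M,\R)$ by a harmonic $(2n-2)$-form $\beta$, splitting the harmonic $2$-form $*\beta$ into harmonic pure-type pieces $\gamma_1+\gamma_2$ and returning via $*$ to get $[T]=[*\gamma_1]+[*\gamma_2]$, whereas you build injections $H^{1,1}_J(M)_\R\hookrightarrow H_{1,1}^J(M)_\R$ and $H^{(2,0),(0,2)}_J(M)_\R\hookrightarrow H_{(2,0),(0,2)}^J(M)_\R$ and close with a dimension count against $\dim H_2(M,\R)=\dim H^2(M,\R)$. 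Your version buys a little robustness: you never need the on-the-nose identity $*\beta=\gamma_1+\gamma_2$ (which the paper uses implicitly and which holds because an exact harmonic form vanishes), only injectivity of $\alpha\mapsto T_\alpha$, verified cleanly via $T_\alpha(\alpha)=\Vert\alpha\Vert^2$; the price is that your fullness argument formally depends on purity and on finite-dimensionality of $H_2(M,\R)$, neither of which is an issue here. Your observation that purity requires only $\mathcal{C}^{\infty}$ fullness and duality, not the calibration, matches the remark the paper makes immediately after the theorem.
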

\begin{proof} We start to prove that $J$ is pure. We show that
$\pi_{1,1} {\mathcal B}_2 \cap {{\mathcal Z}_{1,1} ^J } = {\mathcal B}_{1,1}^J$. \newline
Since
$\pi_{1,1} {\mathcal B}_2 \cap {{\mathcal Z}_{1,1} ^J }\supset {\mathcal B}_{1,1}^J$, it will be
enough to prove the other inclusion.\newline
Let $T \in \pi_{1,1} {\mathcal B}_2 \cap {{\mathcal Z}_{1,1} ^J }$; then $T = \pi_{1,1} dS$, where $S$ is
a real $3$-current and $d (\pi_{1,1} d S) =0$. We have to show that $T=\pi_{1,1} d S$ is a boundary, i.e.
that it vanishes on any closed real $2$-form $\alpha$.

If $\alpha$ is exact, then one has immediately that $(\pi_{1,1} d S)(\alpha) = 0$. Suppose
that $[\alpha] \neq 0 \in H^2(M, \R)$, then since $J$ is ${\mathcal C}^{\infty}$ pure and full, we can write
$$
\alpha = \alpha_1 + \alpha_2 + d \gamma,
$$
with $\alpha_1 \in {\mathcal Z}^{1,1}_J$, $\alpha_2 \in {\mathcal Z}^{(2,0),(0,2)}_J$ and
$\gamma \in \Omega^1 (M)_{\R}$. Therefore,
$$
T(\alpha)=(\pi_{1,1} d S) (\alpha)
= (\pi_{1,1} dS) (\alpha_1 + \alpha_2) = (dS) (\alpha_1) =0\,,
$$
since $\alpha_{1}$ is closed.

If $n = 2$ in order to prove that $J$ is also full, we have to show that \eqref{fullforcurrents}
holds.\newline
Let $[T] \in H_{2} (M, \R)$; then there exists a smooth closed $2$-form $\beta$ on $M$ such that
$[T] = [\alpha]$. Since $J$ is ${\mathcal C}^{\infty}$ full, we have
that $[\alpha] = [\alpha_1] + [\alpha_2]$,
with $\alpha_1 \in {\mathcal Z}^{1,1}_J$ and $\alpha_2 \in {\mathcal Z}^{(2,0),(0,2)}_J$, then
$\alpha_1$ and $\alpha_2$ can be viewed as elements of $ {\mathcal Z}_{1,1}^J$
and ${\mathcal Z}_{(2,0),(0,2)}^J$ respectively.

If $n > 2$ and for any cohomology class in $H^{1,1}_J (M)_{\R}$ ($H^{(2,0),(0,2)}_J (M)_{\R}$
respectively) one can find a harmonic representative in ${\mathcal Z}^{1,1}_J$
(${\mathcal Z}^{(2,0),(0,2)}_J$ respectively), then in order to prove that $J$ is
full we can argue in the following way. \newline
Let $[T] \in H_{2} (M, \R)$, then there exists a smooth harmonic $(2n-2)$-form $\beta$ on
$M$ such that $[T] = [\beta]$. Then, the $2$-form $\gamma =*\beta$
is closed and defines a cohomology class $[\gamma] \in H^2(M, \R)$. By
the assumption, thus there exist real harmonic forms $\gamma_1 \in \Omega^{1,1}_J(M)_{\R}$
and $ \gamma_2 \in \Omega^{(2,0),(0,2)}_J(M)_{\R}$ such that
$$
[\gamma] = [\gamma_1] + [\gamma_2].
$$
The $(2n- 2)$-forms $\beta_1=*\gamma_1$ and $\beta_2=*\gamma_2$ then can be viewed as elements
respectively of
$ {\mathcal Z}_{1,1}^J$ and ${\mathcal Z}_{(2,0),(0,2)}^J$, i.e.
$$
[T] = [\beta_1] + [\beta_2]\,.
$$
\end{proof}
\begin{rem} {\rm {The assumption in Theorem \ref{harmonic} that any $(1,1)$ (respectively $(2,0)+(0,2)$)
de Rham class contains a harmonic representative seems quite strong. Nevertheless, we will provide several
examples of compact non-K\"ahler solvmanifolds satisfying the above assumption. In order to get the pureness of $J$,
it is enough to assume that $J$ is ${\mathcal C}^\infty$ full (see also \cite{LIZhang}). }}
\end{rem}
\begin{rem}{\rm
{In view of \cite[Theorem 2.3]{DLZ}, if $n = 2$, then any almost complex structure $J$ is
${\mathcal C}^{\infty}$ pure and full; therefore by Theorem \ref{harmonic} $J$ is
pure and full.}}
\end{rem}
\section{Hard Lefschetz condition} \label{hardLefschetz}
Let $(M, \omega)$ be a symplectic manifold of dimension $2n$. For any pair of (real or complex)
$k$-forms $\alpha,\beta$ on $M$, denote
by $\omega(\alpha,\beta)$ the bilinear form induced by the symplectic form $\omega$ on
the space $\Omega^k(M)$ of $k$-forms. Then if $\beta\in\Omega^{k}(M)$, for any
$\alpha\in\Omega^{k}(M)$, the following
$$
\alpha\wedge*_{\omega}\beta =\omega(\alpha,\beta)\frac{\omega^n}{n!}
$$
defines the {\em symplectic Hodge operator} $*_{\omega}:\Omega^{k}(M)\to\Omega^{2n-k}(M)$ on
$(M,\omega)$. Then,
according to the above definition of the symplectic Hodge operator, the {\em symplectic codifferential}
is the
operator $d^{*_\omega}:\Omega^k(M)\to\Omega^{k-1}(M)$ defined by
$$
d^{*_\omega}\alpha=(-1)^{k+1}*_{\omega} d *_{\omega}\alpha\,,
$$
for any $\alpha\in\Omega^k(M)$. By definition, a $k$-form $\alpha$ is said to be {\em symplectic harmonic}
if it satisfies
$$
d\alpha=d^{*_\omega}\alpha=0\,.
$$

By \cite[Remark 2.4.4]{Brylinski} on an almost-K\"ahler manifold $(M, \omega, J, g)$ we have
these two properties:
\begin{enumerate}
\item[I)] for any $(p, q)$-form $\alpha$, $*_{\omega} (\alpha) = i^{p - q} *_g (\alpha)$, where $*_{\omega}$
denotes the symplectic Hodge operator;\vskip.2truecm
\item[II)] if $\alpha$ is harmonic of type $(p,q)$, then also $*_{\omega} d*_{\omega} (\alpha) =0$,
and, consequently,
$\alpha$ is symplectic harmonic.
\end{enumerate}
Since $d(*_{\omega} \omega) =0$, the symplectic form $\omega$ on an almost-K\"ahler
manifold $(M, \omega, J, g)$ is harmonic.\vskip.5truecm
We recall that a $2n$-dimensional symplectic manifold $(M, \omega)$ satisfies the {\em Hard Lefschetz
condition} if the cup-product maps:
$$
[\omega]^k: H^{n - k} (M) \to H^{n + k} (M), \quad 0 \leq k \leq n
$$
are isomorphisms.\newline
By \cite{Mathieu}, a compact symplectic manifold $(M,\omega)$ satisfies the Hard Lefschetz condition
if and only if
any de Rham cohomology class has a symplectic harmonic representative.\newline
A classical result by \cite{DGM} states
that, if $(M, \omega, J)$ is a compact K\"ahler, i.e. if $J$ is an
integrable almost-K\"ahler structure on a compact manifold, then it satisfies the Hard Lefschetz condition.

A natural question is to see if there is any relation in the case
of an almost-K\"ahler structure $(\omega, J)$ between the
condition for $J$ to be pure and full and
the condition for $\omega$ to satisfy the Hard Lefschetz property.
We can prove the following
\begin{theorem} Let $(M, \omega)$ be a $2n$-dimensional compact symplectic manifold which satisfies the Hard
Lefschetz condition and $J$ be a ${\mathcal C}^{\infty}$ pure and
full almost complex structure calibrated by $\omega$. Then $J$ is
pure and full.
\end{theorem}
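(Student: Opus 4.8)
The plan is to reduce everything to Theorem \ref{harmonic}. That theorem already yields that a $\mathcal{C}^\infty$ pure and full $J$ calibrated by $\omega$ is pure, so the whole task is to verify its second hypothesis: that every class in $H^{1,1}_J(M)_\R$ and in $H^{(2,0),(0,2)}_J(M)_\R$ has a representative which is $g$-harmonic, for the metric $g=\omega(\cdot,J\cdot)$, and of the corresponding pure type. The bridge between the symplectic and the Riemannian Hodge theories is property I): on the almost-K\"ahler manifold $(M,\omega,J)$ one has $*_\omega=i^{p-q}*_g$ on $(p,q)$-forms, so that $*_\omega=*_g$ on real $(1,1)$-forms and $*_\omega=-*_g$ on real $(2,0)+(0,2)$-forms. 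Hence for a closed $2$-form of pure type the conditions $d^{*_\omega}\alpha=0$ and $d^{*}\alpha=0$ coincide, i.e. symplectic harmonicity and $g$-harmonicity agree. It therefore suffices to produce \emph{symplectic harmonic} representatives of pure type, and this is where the Hard Lefschetz hypothesis enters through Mathieu's theorem \cite{Mathieu}: every de Rham class has a symplectic harmonic representative.

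The case of $H^{(2,0),(0,2)}_J(M)_\R$ is essentially free. A form of type $(2,0)+(0,2)$ is primitive, since its contraction with $\omega$ would land in bidegrees $(1,-1)$ and $(-1,1)$ and so vanishes; on such a form $*_\omega$ is given by the Weil identity $*_\omega\alpha=\pm\frac{1}{(n-2)!}\,\omega^{n-2}\wedge\alpha$, whence, if $\alpha$ is closed, $d(*_\omega\alpha)=\pm\frac{1}{(n-2)!}\,\omega^{n-2}\wedge d\alpha=0$ and $\alpha$ is symplectic harmonic; by property I) it is then $g$-harmonic. Thus any closed representative of type $(2,0)+(0,2)$ is already of the required kind, with no use of Hard Lefschetz. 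The same computation shows that a closed primitive $(1,1)$-form is $g$-harmonic, and, since $d(*_\omega\omega)=0$, that $\omega$ itself is $g$-harmonic; consequently any class represented by $\beta_0+\lambda\omega$, with $\beta_0$ closed primitive of type $(1,1)$ and $\lambda\in\R$, admits a $g$-harmonic representative of type $(1,1)$.

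It remains to handle an arbitrary $c\in H^{1,1}_J(M)_\R$, which is the only place where the full strength of the hypothesis is needed. Using Hard Lefschetz I decompose $c=c_0+\lambda[\omega]$ with $c_0$ primitive and $\lambda\in\R$; since $[\omega]\in H^{1,1}_J(M)_\R$ and $H^{1,1}_J(M)_\R$ is a subspace, the primitive part $c_0$ again lies in $H^{1,1}_J(M)_\R$, and by the previous paragraph it is enough to represent $c_0$ by a closed primitive $(1,1)$-form. By Mathieu's theorem $c_0$ has a symplectic harmonic representative, and, the Hard Lefschetz condition making the Lefschetz $\mathfrak{sl}_2$-decomposition descend to symplectic harmonic forms, this representative may be taken primitive, say $s$ with $[s]=c_0$. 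The main obstacle is then to arrange that $s$ be of pure type $(1,1)$: this is delicate precisely because the type components of a symplectic harmonic $2$-form need not individually be closed, so one cannot simply discard the $(2,0)+(0,2)$-part. The point is to use Hard Lefschetz, via Mathieu's theorem, together with the $\mathcal{C}^\infty$-pureness of $J$ (which makes $H^{1,1}_J(M)_\R\cap H^{(2,0),(0,2)}_J(M)_\R=0$) to eliminate the $(2,0)+(0,2)$-component of $s$; this type-purity step is the crux of the proof. Granting it, $s$ is a closed primitive $(1,1)$-form, hence $g$-harmonic, $s+\lambda\omega$ is a $g$-harmonic type-$(1,1)$ representative of $c$, and all hypotheses of Theorem \ref{harmonic} are met, so $J$ is pure and full.
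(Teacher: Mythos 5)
Your proposal has a genuine gap at exactly the point you yourself flag as ``the crux.'' You reduce the theorem to verifying the harmonic-representative hypothesis of Theorem \ref{harmonic}, and the easy parts are handled correctly: a closed primitive $2$-form of pure type is symplectic harmonic by the Weil identity and hence $g$-harmonic by property I), and $\omega$ itself is harmonic. But for an arbitrary cohomologically primitive class $c_0\in H^{1,1}_J(M)_{\R}$ you still need a representative that is simultaneously closed, primitive \emph{as a form}, and of pure type $(1,1)$. Mathieu's theorem gives a symplectic harmonic representative, and the Lefschetz decomposition of symplectic harmonic forms can make it primitive, but nothing forces it to be of type $(1,1)$; conversely, the closed $(1,1)$-representative that exists by the definition of $H^{1,1}_J(M)_{\R}$ need not be primitive as a form. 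The appeal to $\mathcal{C}^{\infty}$-pureness only controls cohomology classes, not the pointwise type components of a chosen representative, so it does not let you discard the $(2,0)+(0,2)$-part of $s$. You write ``granting it'' --- the decisive step is asserted, not proved, and it is at least as delicate as the theorem itself.

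The paper avoids all of this by proving fullness directly at the level of currents, with no harmonic theory. Given $[T]\in H_2(M,\R)$, identify it with a class in $H^{2n-2}(M,\R)$; the Hard Lefschetz condition writes it as $[\beta\wedge\omega^{n-2}]$ for some $[\beta]\in H^2(M,\R)$; $\mathcal{C}^{\infty}$-fullness splits $[\beta]=[\varphi]+[\psi]$ with $\varphi$ closed of type $(1,1)$ and $\psi$ closed of type $(2,0)+(0,2)$; and since $\omega^{n-2}$ has type $(n-2,n-2)$, the closed forms $\varphi\wedge\omega^{n-2}$ and $\psi\wedge\omega^{n-2}$ are closed currents of bidimension $(1,1)$ and $(2,0)+(0,2)$ respectively, which gives the splitting of $[T]$ required for fullness (pureness already follows from Theorem \ref{harmonic}, as you note). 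If you want to keep your strategy you must actually construct the pure-type $g$-harmonic representative; as it stands, the argument is incomplete.
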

\begin{proof} If $n =2$ the result follows by Theorem \ref{harmonic}. If $n > 2$, we know already that $J$ is pure and we
have to show that
$$
H_{2} (M, \R) =H_{1,1}^J (M)_{\R} \oplus H_{(2,0),(0,2)}^J
(M)_{\R}\,.
$$
Let $a=[T]\in H_2(M,\R)$. Then we can represent $a$ as a de Rham class
$a=[\alpha]$, where $\alpha\in\Omega^{2n-2}(M)$ is $d$-closed. By the assumptions, $(M,\omega)$ satisfies
the Hard Lefschetz condition. Therefore, there exists $b\in H^2(M,\R)$, $b=[\beta]$ such that $a=b\cup [\omega]^{n-2}$, i.e.
$$
[\beta\wedge\omega^{n-2}]=[\alpha]\,.
$$
Since $J$ is ${\mathcal C}^{\infty}$ pure and
full, it follows that
$$
[\beta]=[\varphi]+[\psi]\,,
$$
where $[\varphi]\in H^{1,1}_J (M)_{\R}$, $[\psi]\in H^{(2,0),(0,2)}_J (M)_{\R}$ respectively. Then we obtain
\begin{equation}\label{lefschetz}
a=[T]=[\beta\wedge\omega^{n-2}]=[\varphi\wedge\omega^{n-2}]+[\psi\wedge\omega^{n-2}]\,.
\end{equation}
Since $\varphi$, $\psi$ are real $2$-forms of type $(1,1)$, $(2,0)+(0,2)$ respectively and $\omega^{n-2}$ is a real
form of type $(n-2,n-2)$, condition \eqref{lefschetz} implies that
$$
a=[T]=[R]+[S]\,,
$$
where $R\in H_{1,1}^J (M)_{\R}$ and $S\in H_{(2,0),(0,2)}^J(M)_{\R}\,.$
Hence, $J$ is pure and full.
\end{proof}
Since if $n=2$, then it follows that every almost complex structure is ${\mathcal C}^{\infty}$ pure and
full (see \cite[Theorem 2.3]{DLZ}, it would be interesting to find for $n > 2$ an example of
compact symplectic manifold $(M, \omega)$
which satisfies Hard Lefschetz condition and with a non pure and full almost complex structure calibrated by $\omega$.
\section{Integrable pure and full almost-complex structures}
An integrable almost complex structure is closed (see \cite{HL}),
but in general it is not necessarily (${\mathcal C}^{\infty}$) pure and full. By
\cite{LIZhang} if $J$ is an integrable almost complex structure
and the Fr\"olicher spectral sequence degenerates at $E_1$, then
$J$ is pure and full. Any complex surface has this property, so in
real dimension $4$ the interesting case to study is the one of non
integrable almost complex structures.

For compact complex parallelizable manifolds, i.e. compact quotients of complex Lie
groups by discrete Lie subgroups, we
can prove the following
\begin{theorem} \label{complexparal} If $(M= \Gamma \backslash G, J)$ is a complex parallelizable manifold
and for the de Rham cohomology we have the isomorphism $H^2 (M, \R) \cong H^2 (\mathfrak g)$, then
$J$ is ${\mathcal C}^{\infty}$ full and therefore it is pure. Moreover,
$$
H^2 (M, \R) \cong {\mathcal Z}^{1,1}_{inv} \oplus {\mathcal Z}^{(2,0) + (0,2)}_{inv} /
{\mathcal B}^{(2,0),(0,2)}_{inv}.
$$
\end{theorem}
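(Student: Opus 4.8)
The plan is to exploit the defining feature of a complex parallelizable manifold: the left-invariant $(1,0)$-forms $\omega^1,\dots,\omega^n$ dual to a basis of the complex Lie algebra $\mathfrak g$ are holomorphic Maurer--Cartan forms, so their structure equations $d\omega^k=-\tfrac12\sum c^k_{ij}\,\omega^i\wedge\omega^j$ are of pure type $(2,0)$; equivalently $\bar\partial\omega^k=0$ and $d\bar\omega^k$ is of type $(0,2)$. Since $J$ is integrable one has $d=\partial+\bar\partial$ (the operators $A_J,\bar A_J$ vanish). First I would record the elementary consequence for invariant real $2$-forms: writing $\Phi=\Phi^{1,1}+\Phi^{(2,0),(0,2)}$, the derivative $d\Phi^{1,1}$ lands in types $(2,1)+(1,2)$, while $d\Phi^{(2,0),(0,2)}$ lands in types $(3,0)+(0,3)$. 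As these type spaces are disjoint, $d\Phi=0$ forces both pieces to be closed, so
$$
\mathcal Z^2_{inv}=\mathcal Z^{1,1}_{inv}\oplus\mathcal Z^{(2,0),(0,2)}_{inv}
$$
with each summand consisting of closed forms.

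Next I would analyze the invariant coboundaries. For an invariant $1$-form $\gamma=\sum p_i\,\omega^i+\sum q_i\,\bar\omega^i$ the structure equations give $d\gamma=\sum p_i\,d\omega^i+\sum q_i\,d\bar\omega^i$, which is of type $(2,0)+(0,2)$. Hence $\mathcal B^2_{inv}=\mathcal B^{(2,0),(0,2)}_{inv}$ has no $(1,1)$-part, i.e. $\mathcal B^{1,1}_{inv}=0$. Combining this with the previous display and the standing hypothesis $H^2(M,\R)\cong H^2(\mathfrak g)=\mathcal Z^2_{inv}/\mathcal B^2_{inv}$ yields at once the \emph{Moreover} isomorphism
$$
H^2(M,\R)\cong \mathcal Z^{1,1}_{inv}\oplus\bigl(\mathcal Z^{(2,0),(0,2)}_{inv}/\mathcal B^{(2,0),(0,2)}_{inv}\bigr).
$$
To prove $\mathcal C^\infty$-fullness I would then take an arbitrary class in $H^2(M,\R)$, represent it by an invariant closed $2$-form $\tilde\alpha$ (possible by the hypothesis), and split $\tilde\alpha=\tilde\alpha^{1,1}+\tilde\alpha^{(2,0),(0,2)}$ into its two closed components; this exhibits the class as a sum of an element of $H^{1,1}_J(M)_\R$ and one of $H^{(2,0),(0,2)}_J(M)_\R$, which is precisely $\mathcal C^\infty$-fullness.

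Finally, for pureness I would invoke the duality between forms and currents. Since $J$ is integrable it is closed, hence $\mathcal C^\infty$ closed, so the Li--Zhang comparison between the form spaces $H^\bullet_J(M)_\R$ and the current spaces $H_\bullet^J(M)_\R$ applies. Under the de Rham pairing a $(1,1)$-form pairs nontrivially only with bidimension $(1,1)$-currents and a $(2,0)+(0,2)$-form only with bidimension $(2,0)+(0,2)$-currents, so $H_{1,1}^J(M)_\R$ and $H_{(2,0),(0,2)}^J(M)_\R$ are the annihilators of $H^{(2,0),(0,2)}_J(M)_\R$ and of $H^{1,1}_J(M)_\R$ respectively. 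Dualizing the fullness identity $H^{1,1}_J(M)_\R+H^{(2,0),(0,2)}_J(M)_\R=H^2(M,\R)$ then gives $H_{1,1}^J(M)_\R\cap H_{(2,0),(0,2)}^J(M)_\R=0$, which is the pureness of $J$.

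The main obstacle is exactly this last step. The coarse computation of Section~1--2 of the plan is routine once one uses that $G$ is a \emph{complex} Lie group (this is what makes $d\omega^k$ of pure type $(2,0)$ and thus makes $d$ preserve the splitting $(1,1)$ versus $(2,0)+(0,2)$ on invariant forms). The delicate point is justifying the annihilator identifications: one must verify that every closed current killing all $(2,0)+(0,2)$-classes is, modulo boundaries, of bidimension $(1,1)$, and that the induced pairing on cohomology is perfect. This is exactly where the closedness of the integrable $J$ enters, and I would lean on the cited Li--Zhang relations rather than re-proving them.
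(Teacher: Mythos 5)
Your proposal is correct and follows essentially the same route as the paper: the Maurer--Cartan forms of the complex Lie group give $d(\Omega^{1,0}_{inv})\subseteq\Omega^{2,0}_{inv}$, hence $d$ preserves the splitting of invariant $2$-forms into $(1,1)$ and $(2,0)+(0,2)$ parts and ${\mathcal B}^{1,1}_{inv}=0$, which yields the displayed isomorphism and ${\mathcal C}^{\infty}$-fullness. For pureness the paper simply invokes Theorem \ref{harmonic}, whose relevant half is exactly your duality argument; note that only ${\mathcal C}^{\infty}$-fullness, the type-pairing observation, and the fact that a closed current is a boundary iff it vanishes on all closed forms are needed there, so the closedness of $J$ and the Li--Zhang form/current comparison that you single out as the delicate point are not actually required.
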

\begin{proof}
Denote by $\Omega^{p,q}_{inv}(M)$ the space of left-invariant complex forms of type $(p,q)$.
Since $J$ is bi-invariant, we have at the level of left-invariant forms
$d (\Omega^{1,0}_{inv}(M)) \subseteq \Omega^{2,0}_{inv}(M))$ and therefore
$$
\begin{array}{l}
d (\Omega^{1,1}_{inv} (M))_{\R} \subseteq (\Omega^{2,1}_{inv} (M) + \Omega^{1,2}_{inv} (M) )_{\R}\,,\\[7pt]
d (\Omega^{2,0}_{inv} (M) + \Omega^{0,2}_{inv} (M))_{\R}
\subseteq (\Omega^{3,0}_{inv} (M) + \Omega^{0,3}_{inv} (M) )_{\R}\,.
\end{array}
$$
Therefore
$$H^2(M, \R) \cong \left({\mathcal Z}^{1,1}_{inv} \oplus {\mathcal Z}^{(2,0) + (0,2)}_{inv} \right) /
{\mathcal B}_{inv}\,,
$$
since ${\mathcal B}^{1,1}_{inv} =0$. This implies that $J$ is ${\mathcal C}^{\infty}$ full, since any $[\alpha]
\in H^2(M, \R)$ can be written as
$$
[\alpha] = [\alpha_1] + [\alpha_2]
$$
with $\alpha_1$ and $\alpha_2$ left-invariant and
respectively belonging to ${\mathcal Z}^{1,1}_{inv}$ and to $ {\mathcal Z}^{(2,0) + (0,2)}_{inv}$.
As a consequence of Theorem \ref{harmonic} we get that $J$ is pure.
\end{proof}
Therefore we have the following
\begin{cor} \label{complexparalnilm}
Let $(M,J)$ be a complex parallelizable nilmanifold. Then $J$ is ${\mathcal C}^{\infty}$ full and it is pure.
\end{cor}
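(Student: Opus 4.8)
The plan is to deduce this statement directly from Theorem \ref{complexparal}, whose only nontrivial hypothesis beyond complex parallelizability is the cohomological isomorphism $H^2(M,\R)\cong H^2(\g)$. So the entire task reduces to checking that this isomorphism is automatic for nilmanifolds, and then invoking the theorem verbatim.

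First I would observe that a complex parallelizable nilmanifold $M=\Gamma\backslash G$ is the compact quotient of a complex \emph{nilpotent} Lie group $G$ by a lattice $\Gamma$; forgetting the complex structure, the underlying real Lie group of $G$ is still nilpotent, because the real Lie algebra underlying a complex nilpotent Lie algebra is itself nilpotent (the lower central series is unchanged by restriction of scalars). Hence $M$ is a compact nilmanifold in the usual real sense, to which the invariant-cohomology machinery applies.

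Next I would invoke Nomizu's theorem: for a compact nilmanifold $M=\Gamma\backslash G$ with $G$ real nilpotent, the inclusion of left-invariant forms into all differential forms induces an isomorphism between the Chevalley-Eilenberg cohomology $H^*(\g)$ of the Lie algebra $\g$ and the de Rham cohomology $H^*(M,\R)$. In degree two this gives exactly $H^2(M,\R)\cong H^2(\g)$, the standing assumption of Theorem \ref{complexparal}. With this isomorphism established, Theorem \ref{complexparal} applies and yields that $J$ is ${\mathcal C}^{\infty}$ full; pureness then follows as in that theorem, namely from Theorem \ref{harmonic} once ${\mathcal C}^{\infty}$ fullness is known. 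I do not expect any genuine obstacle here: the whole content of the corollary is the remark that Nomizu's theorem furnishes the required de Rham/Chevalley-Eilenberg isomorphism for every nilmanifold, so that the extra cohomological hypothesis of Theorem \ref{complexparal} is satisfied for free in the nilpotent case.
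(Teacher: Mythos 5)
Your proposal is correct and is exactly the argument the paper intends: the corollary is stated as an immediate consequence of Theorem \ref{complexparal}, with the cohomological hypothesis $H^2(M,\R)\cong H^2(\g)$ supplied by Nomizu's theorem (the paper notes just before introducing currents that this assumption holds whenever $G$ is completely solvable, in particular nilpotent). Your extra observation that the underlying real Lie group of a complex nilpotent Lie group is again nilpotent is a worthwhile check that the paper leaves implicit, but the route is the same.
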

A classification of compact complex parallelizable solvmanifolds of complex dimension $3, 4, 5$
was obtained by Nakamura in \cite{Nakamura}. Therefore, for instance, we can apply
Corollary \ref{complexparalnilm} to the
Iwasawa manifold endowed with the natural bi-invariant complex
structure. The other complex solvmanifold of real dimension $6$ is
the Nakamura manifold. In this case we can also show that it
admits pure and full almost complex structures, even if its de
Rham cohomology is not isomorphic to the Chevalley cohomology
of the corresponding solvable Lie algebra.
\begin{ex} {\bf (Nakamura manifold)}
{\rm Consider the solvable
Lie group $G$ with structure equations
$$
\left\{ \begin{array}{l} d e^1 =0\,,\\[5pt]
d e^2 = e^{12} - e^{45}\,,\\[5pt]
d e^3 = -e^{13} + e^{46}\,,\\[5pt]
d e^4 =0\,,\\[5pt]
d e^5 = e^{15} - e^{24}\,,\\[5pt]
d e^6 = e^{16} + e^{34}\,.
\end{array}
\right.
$$
The Lie group $G$ is isomorphic to $\C^3$ with product $*$, defined in terms of
the complex coordinates on $\C^3$ $(z_1=x_1 + i x_4, z_2 = x_2 + i x_5, z_3 = x_3 + i x_6)$ by
$$
^t (z_1, z_2, z_3) *\, ^t(w_1, w_2, w_3) =\, ^t(z_1 + w_1, e^{- w_1} z_2 + w_2, e^{w_1} z_3 + w_3),
$$
for any $^t(z_1, z_2, z_3),\, ^t(w_1, w_2, w_3) \in \C^3$. \newline
The {\em Nakamura manifold} is the compact quotient $X = \Gamma \backslash G$
of $G$ by a uniform discrete subgroup $\Gamma$.

By \cite[Corollary 4.2]{debaT} we have
$$
\begin{array}{lcl} H^2 (X, \R) &=& \R < [e^{14}], [e^{26} - e^{35}], [e^{23} - e^{56}], [\cos(2 x_4)
(e^{23} + e^{56}) - \sin(2x_4) (e^{26} + e^{35})]\,, \\[7pt]
&&[\sin(2 x_4)(e^{23} + e^{56}) - \cos(2x_4) (e^{26} + e^{35})]>\,,
\end{array}
$$
i.e. in this case the de Rham cohomology of $M$ is not isomorphic to $H^* ({\mathfrak g})$. The previous
representatives are all harmonic forms.
The complex solvmanifold $X$ admits a left-invariant almost complex structure $J$ defined by the
$(1,0)$-forms:
$$
\eta^1 = e^1 + i e^4\,, \quad \eta^2 = e^3 + i e^5\,, \quad \eta^3 = e^6 + i e^2
$$
calibrated by the symplectic form
$$
\omega = e^{14} + e^{35} + e^{62}.
$$
We have that the forms
$$
e^{14}\,,\qquad e^{26} - e^{35},
$$
$$
\cos(2 x_4) (e^{23} + e^{56}) - \sin(2x_4) (e^{26} + e^{35})\,,\qquad
\sin(2 x_4) (e^{23} + e^{56}) - \cos(2x_4) (e^{26} + e^{35})
$$
are all of type $(1,1)$ with respect to $J$ and $e^{23} - e^{56}$ is of type $(2,0)$ with
respect to $J$. Then, by applying Theorem \ref{harmonic} we get that $J$ is pure and full.

Moreover, $X$ admits the bi-invariant complex structure $\tilde J$ defined by the $(1,0)$-forms
$$
\tilde \eta^1 = e^1 + i e^4, \quad\tilde \eta^2 = e^2 + i e^5, \quad \tilde \eta^3 = e^3 + i e^6.
$$
By Theorem \ref{harmonic} $J$ is pure and full.
}
\end{ex}
\section{A family of pure and full almost complex structures} \label{familyLafontaine}
In this section we will provide a family of pure and full almost complex structures on a nilmanifold of real
dimension $4$ and on two $2$-step solvmanifolds of real dimension $4$ and $6$.

From now on we will use the convention that the almost complex structure $J$ acts on $1$-forms as $(J \alpha) (X) = - \alpha (J X)$, for any $1$-form $\alpha$ and vector field $X$.

\subsection{A $4$-dimensional solvmanifold $M^4$} \label{Sol3}
Consider the $3$-dimensional completely solvable Lie group $\mbox{\em Sol}_3 = \R \ltimes_{\phi} \R^2$,
where $\phi(t) $ is the one-parameter subgroup defined by $\phi(t) = {\mbox {diag}} (e^t, e^{-t})$. By
\cite{FG} a compact quotient $M^4$ of the Lie group $\mbox{\em Sol}_3 \times \R$ with structure equations
\begin{equation} \label{eqstructuresol3}
\left\{
\begin{array}{l}
d e^1 =0\,,\\[5pt]
d e^2 =0\,,\\[5pt]
d e^3 = -e^{13}\,,\\[5pt]
d e^4 =e^{14}\,,
\end{array}
\right.
\end{equation}
by a uniform discrete subgroup admits a symplectic structure which satisfies the Hard Lefschetz condition.
The symplectic structure is given by
$$
\omega = e^{12} + e^{34}.
$$
The compact symplectic manifold is cohomologically K\"ahler (in fact, it has the same minimal model as
${\mathbb T}^2 \times S^2$) and it does not carry complex structure. The $\omega$-calibrated almost
complex structure $J_0$ defined by the $(1,0)$-forms$$
\eta^1 = e^1 + i e^2, \, \eta^2 = e^3 + i e^4
$$
is pure and full, since by \cite{Hattori} we have that
$$
H^2(M^4, \R)= \R < [e^{12}], [e^{34}]>
$$
and both the forms $e^{12}$ and $e^{34}$ are of type $(1,1)$. We
can show that this compact symplectic manifold $(M^4, \omega)$
admits a family of $\omega$-calibrated almost complex structures
which are pure and full. Indeed, let
us construct a ${\mathcal C}^\infty$ pure and full deformation
corresponding to the matrix
$$
J_t = (I + L_t) J_0 (I + L_t)^{-1}
$$
with respect to the coframe $(e^1, \ldots,e^4)$, where $I$ is the identity matrix of order $4$,
$J_0$ is the matrix
$$
J_0 = \left(
\begin{array} {cccc}
0&-1&0&0\\
1&0&0&0\\
0&0&0&-1\\
0&0&1&0
\end{array} \right)
$$
and
$$
L_t = \left(
\begin{array} {cccc}
0&0&ta&0\\
0&0&0&-ta\\
ta&0&0&0\\
0&-ta&0&0
\end{array} \right)\,,
$$
with $t,a\in\R$ satisfying the condition
$$
4t^2 a^2 < 1.
$$
A direct computation gives
$$
J_t = \frac{1} {( t^2 a^2 - 1)} \left(
\begin{array} {cccc}
0&1+t^2a^2&0&2ta\\[10pt]
-1 - t^2 a^2 &0&2ta&0\\[10pt]
0&2ta&0&1 + t^2 a^2\\[10pt]
2ta&0&-1 - t^2 a^2&0
\end{array}
\right)\,.
$$
Therefore, by \cite{Audin} (see Section \ref{calibrated}) the almost
complex structure $J_t$ is $\omega$-calibrated.
A basis of $(1,0)$-forms for $J_t$ is given by
$$
\begin{array}{l}
\varphi^1_t = e^1 + i \left(-\frac{1 + t^2 a^2}{t^2a^2-1} e^2 +\frac{2ta}{t^2 a^2-1} e^4\right)\,,\\[10pt]
\varphi^2_t = e^3 + i \left(-\frac{2ta}{t^2 a^2 -1} e^2 -\frac{1 + t^2 a^2}{t^2a^2-1} e^4\right)\,.
\end{array}
$$
Thus, we get that
$J_t$ is a curve of pure and full almost complex structures on $M^4$, calibrated
by the symplectic form $\omega$.
\subsection{A $4$-dimensional nilmanifold $\tilde M^4$} \label{M2nil} The only two
nilmanifolds of dimension $4$ which admit an
almost-K\"ahler structure $(J, \omega)$ are the Kodaira-Thurston
manifold and the $3$-step nilmanifold $\tilde M^4 = \Gamma
\backslash N$, compact quotient of the $3$-step nilpotent Lie
group $N$, whose nilpotent Lie algebra ${\mathfrak {n}}$ has
structure equations
\begin{equation} \label{eqstructurenil3}
\left\{
\begin{array}{l}
d e^j =0\,, \qquad j = 1,2,\\[3pt]
d e^3 = e^{14}\,,\\[3pt]
d e^4 = e^{12}\,,
\end{array}
\right.
\end{equation}
where $e^{ij}$ stands for $e^i \wedge e^j$.

The Kodaira-Thurston manifold has a pure and full integrable
almost complex structure since in this case the Fr\"olicher spectral
sequence degenerates at $E_1$. Since $b_1(\tilde M^4) = 2$, $\tilde M^4$ does
not admit any integrable almost complex structure (see
\cite{FGG}).
\begin{prop} The $3$-step nilmanifold $\tilde M^4$ admits a pure and full
almost-K\"ahler almost complex structure and a
pure and full (non almost-K\"ahler) almost complex structure.
\end{prop}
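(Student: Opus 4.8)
The plan is to exploit that $\tilde M^4$ has real dimension four, so $n=2$. First I would record that, since $\tilde M^4$ is a nilmanifold, its de Rham cohomology is represented by left-invariant forms (cf. the discussion after Theorem \ref{relwithinvforms}); a direct check on the structure equations \eqref{eqstructurenil3} gives $de^{13}=de^{24}=0$, while $e^{12}=de^4$ and $e^{14}=de^3$ are exact and $e^{23},e^{34}$ are not closed, so that
\[
H^2(\tilde M^4,\R)=\R\langle [e^{13}],[e^{24}]\rangle .
\]
The decisive observation is that by \cite[Theorem 2.3]{DLZ} every almost complex structure on a compact $4$-manifold is $\mathcal C^\infty$ pure and full, and by Theorem \ref{harmonic} (the case $n=2$) any such structure calibrated by a non-degenerate $2$-form is automatically pure and full. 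Since any $J$ is calibrated by the fundamental form of any compatible metric, every almost complex structure on $\tilde M^4$ is pure and full. Thus the proposition reduces to exhibiting two calibrated almost complex structures, one almost-K\"ahler and one not.

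For the almost-K\"ahler structure I would take $J$ with $(1,0)$-forms $\phi^1=e^1+ie^3$ and $\phi^2=e^2+ie^4$. A short computation gives $\tfrac{i}{2}(\phi^1\w\ov{\phi^1}+\phi^2\w\ov{\phi^2})=e^{13}+e^{24}$, so that $\omega:=e^{13}+e^{24}$ is the fundamental form of the induced Hermitian metric; it is closed and, since $\omega^2=-2\,e^{1234}\neq 0$, non-degenerate. Hence $(\omega,J)$ is almost-K\"ahler and, by the reduction above, $J$ is pure and full. (It is not integrable, equivalently $(\tilde M^4,J)$ is not K\"ahler, because $d\phi^1=ie^{14}$ has a non-trivial $(0,2)$-component, consistently with $b_1(\tilde M^4)=2$.)

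For the non almost-K\"ahler structure I would take $J'$ with $(1,0)$-forms $\psi^1=e^1+ie^2$ and $\psi^2=e^3+ie^4$. Its fundamental form is $e^{12}+e^{34}$, which is non-degenerate but not closed (since $d e^{34}=-e^{123}$); nevertheless $J'$ is calibrated by this almost symplectic form, so again by Theorem \ref{harmonic} it is pure and full. It then remains to show that $J'$ admits no compatible symplectic form. The space of real $J'$-invariant $2$-forms is $\R\langle e^{12},e^{34},e^{13}+e^{24},e^{14}-e^{23}\rangle$, and imposing $d=0$ leaves exactly $\R\langle e^{12},e^{13}+e^{24}\rangle$. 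Writing such a form as $a\,e^{12}+c\,(e^{13}+e^{24})=\tfrac{i}{2}\sum_{j,k}h_{j\bar k}\,\psi^j\w\ov{\psi^k}$, one finds the Hermitian matrix
\[
h=\begin{pmatrix} a & -ic\\ ic & 0\end{pmatrix},\qquad \det h=-c^2\le 0,
\]
so no closed invariant $(1,1)$-form is positive.

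The main obstacle is exactly this last step: passing from ``no invariant compatible symplectic form'' to ``no compatible symplectic form at all''. Here I would invoke the symmetrization argument available on a nilmanifold: averaging a form over $\Gamma\backslash N$ commutes with $d$ and, because $J'$ is left-invariant, preserves the type $(1,1)$ and the positivity $\omega(X,J'X)>0$ (each translate $L_g^*\omega$ is again positive, since $dL_g$ commutes with $J'$). Therefore a hypothetical compatible symplectic form would average to an invariant closed positive $(1,1)$-form, contradicting $\det h\le 0$. This shows that $J'$ is not almost-K\"ahler, completing the proof of the proposition.
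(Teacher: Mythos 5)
Your proposal is correct and follows essentially the same route as the paper: the same two almost complex structures $\eta^1=e^1+ie^3,\ \eta^2=e^2+ie^4$ and $\tilde\eta^1=e^1+ie^2,\ \tilde\eta^2=e^3+ie^4$, the cohomology computation via invariant forms, the combination of \cite[Theorem 2.3]{DLZ} with Theorem \ref{harmonic} to get pure and full (which the paper records as a remark in exactly the form you use), and, for the non-almost-K\"ahler claim, the same two-step argument of ruling out invariant calibrating symplectic forms (your Hermitian-determinant check is equivalent to the paper's observation that $\tilde\omega(e_3,e_4)=0$) followed by the averaging argument of \cite{FGr}. No gaps.
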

\begin{proof} Consider the almost complex structure $J_0$ defined by the $(1,0)$-forms
$$
\eta^1 = e^1 + i e^3, \,\,\, \eta^2 = e^2 + i e^4.
$$
Then $J_0$ is almost-K\"ahler with respect to
$$
\omega = e^{13} + e^{24}.
$$
By Nomizu's Theorem \cite{No} one has that the de Rham cohomology of the nilmanifold $\tilde M^4$ is
isomorphic to the
Chevalley-Eilenberg cohomology of the Lie algebra ${\mathfrak n}$. In particular, we have
$$
H^1 (\tilde M^4, \R)= \R <[e^1], [e^2]>, \quad H^2(\tilde M^4, \R) = \R < [e^{13}], [e^{24}]>.
$$
As shown in \cite{Mathieu}, $(\tilde M^4, \omega)$ does not satisfies hard Lefschetz property since
$$
e^1 \wedge \omega = d(e^{23}), \quad e^2 \wedge \omega = d(e^{34}).
$$

Since $e^{13}$ and $e^{24}$ are forms of type $(1,1)$ with respect to $J_0$ or
by \cite[Theorem 2.3]{DLZ}, we have
that $J_0$ is ${\mathcal C}^{\infty}$ pure and full with
$$
H^{2} (\tilde M^4, \R)=H^{1,1}_{J_0} (\tilde M^4)_{\R}.
$$
Thus, by Theorem \ref{harmonic} $J_0$ is pure and full.

The nilmanifold $\tilde M^4$ admits also an almost complex structure
$\tilde J_0$ which is pure and full, but not almost-K\"ahler.
Consider on $\tilde M^4$ the left-invariant almost complex structure defined by the $(1,0)$-forms $$
\tilde \eta_1 = e^1 + i e^2\,,\quad \tilde \eta_2 = e^3 + i e^4\,.
$$
In this case we have
$$
H^{1,1}_{\tilde J_0} (\tilde M^4)_{\R} = \R< [e^{13} + e^{24}]>\,, \quad
H^{(2,0)+(0,2)}_{\tilde J_0} (\tilde M^4)_{\R}= \R <[e^{13} - e^{24}]>
$$
and, again by using Theorem \ref{harmonic} we have that $\tilde
J_0$ is full and pure.

In order to prove that $\tilde J_0$ is not almost-K\"ahler, we
start to show that there is no symplectic left-invariant $2$-form
$\tilde \omega$ on $\tilde M^4$ such that $\tilde J_0$ is calibrated by
$\tilde \omega$. Indeed, such a $2$-form has to be closed and of type
$(1,1)$ with respect to $\tilde J_0$, i.e. of the form
$$
\tilde \omega = a e^{12} + b (e^{13} + e^{24})\,,
$$
with $a, b \in \R$, but then $\tilde \omega (e_3, e_4) =0.$

By using the same argument as in \cite{FGr}, we then show that
there exists no symplectic $2$-form $\omega$ on $\tilde M^4$ such that
$\tilde J_0$ is $\omega$-calibrated. Indeed, if such a non
left-invariant symplectic $2$-form $\omega$ exists, we can
consider the left-invariant $2$-form $\tilde \omega$, defined by
$$
\tilde \omega (X_1, X_2) = \int_{x\in \tilde M^4} \omega_x (X_1 \vert_x,
X_2 \vert_x) d \mu,
$$
where $X_1, X_2$ are projections of left -invariant vector fields
from $G_2$ to $\tilde M^4$ and $d \mu$ is a bi-invariant volume form on
$\tilde M^4$. The form $\tilde \omega$ is then a closed $(1,1)$-form and
$\tilde J_0$ is calibrated by $\tilde \omega$, but this is a
contradiction.
\end{proof}

We can construct on $\tilde M^4$ a family of pure and full almost-K\"ahler
structures. In order to do this, we will
change a little
bit our notation. We change the basis of $1$-forms so that, in the new
coframe $\{f^1,\ldots ,f^4\}$, the
structure equations are
$$
\left\{
\begin{array}{l}
d f^1 =0\,, \\[5pt]
d f^2 = f^{14}\,,\\[5pt]
df^3=0\,,\\[5pt]
d f^4 = f^{13}\,.
\end{array}
\right.
$$
Then the almost-K\"ahler $(\omega,J_0)$ structure is given by
$$
\omega = f^{12} + f^{34}\,,\qquad \varphi^1=f^1+if^2\,,\quad\varphi^2=f^3+if^4\,.
$$
Hence
$$
H^2(\tilde M^4, \R)=\R <[f^{12}], [f^{34}]>=H^{1,1}_{J_0}(\tilde M^4)_{\R}\,.
$$
Let us construct a ${\mathcal C}^\infty$ pure and full deformation of $(\omega,J_0)$
depending on four real parameters
and corresponding to the matrix
$$
J_t = (I + L_t) J_0 (I + L_t)^{-1}
$$
with respect to the dual basis $(f^1, \ldots, f^4)$, where $I$ is the identity matrix of order $4$,
$J_0$ is the matrix
$$
J_0 = \left(
\begin{array} {cccc}
0&-1&0&0\\
1&0&0&0\\
0&0&0&-1\\
0&0&1&0
\end{array} \right)
$$
and
$$
L_t = \left(
\begin{array} {cccc}
ta&tb&0&0\\
tb&-ta&0&0\\
0&0&tp&tq\\
0&0&tq&-tp
\end{array} \right)\,,
$$
with $t,a,b,p,q\in\R$ and satisfying
$$
2t^2(a^2+b^2+p^2+q^2) < 1.
$$
A direct computation gives
$$
J_t = \left(
\begin{array} {cccc}
-\frac{2tb}{t^2(a^2+b^2)-1}&\frac{(ta+1)^2+t^2b^2}{t^2(a^2+b^2)-1}&0&0\\[10pt]
-\frac{(ta-1)^2+t^2b^2}{t^2(a^2+b^2)-1}&\frac{2tb}{t^2(a^2+b^2)-1}&0&0\\[10pt]
0&0&-\frac{2tq}{t^2(p^2+q^2)-1}&\frac{(tp+1)^2+t^2q^2}{t^2(p^2+q^2)-1}\\[10pt]
0&0&-\frac{(tp-1)^2+t^2q^2}{t^2(p^2+q^2)-1}&\frac{2tq}{t^2(p^2+q^2)-1}
\end{array}
\right)\,.
$$
Then, by \cite{Audin} (see Section \ref{calibrated}) the almost
complex structure $J_t$ is $\omega$-calibrated.
%Therefore a basis of $(1,0)$-forms for $J_t$ is given by
%$$
%\begin{array}{l}
%\varphi^1_t = f^1 + i \left(-\frac{2tb}{t^2(a^2+b^2)-1}f^1 -\frac{(ta-1)^2+t^2b^2}{t^2(a^2+b^2)-1}f^2\right)\,,\\[10pt]
%\varphi^2_t = f^3 + i \left(-\frac{2tq}{t^2(p^2+q^2)-1}f^3 -\frac{(tp-1)^2+t^2q^2}{t^2(p^2+q^2)-1}f^4\right)\,.
%\end{array}
%$$
Moreover, we obtain
$$
H^2(\tilde M^4, \R)=H^{1,1}_{J_t}(\tilde M^4)_\R\,,
$$
i.e. $J_t$ is a curve of $\mathcal{C}^\infty$ pure and full almost
complex structures on $\tilde M^4$, calibrated by the symplectic form
$\omega$. As a consequence of Theorem \ref{harmonic}, $J_t$ is a
curve of pure and full almost complex structures.

\smallskip

\subsection{A $6$-dimensional solvmanifold} Consider the completely
solvable Lie algebra $\mathfrak s$ with structure equations
$$
\left\{
\begin{array}{l}
d f^1 =0\,,\\[5pt]
d f^2 = - f^{12}\,,\\[5pt]
d f^3 = f^{34}\,,\\[5pt]
d f^4 = 0,\\[5pt]
d f^5 = f^{15}\,,\\[5pt]
d f^6 = f^{46}\,.
\end{array}
\right.
$$
The Lie algebra is a direct sum of two copies of the $3$-dimensional solvable Lie algebra $\mathfrak{sol}(3)$ and
by \cite[p. 20]{Auslander} the corresponding simply connected Lie group $S$ admits a compact quotient $M^6 = \Gamma \backslash S$.

By Hattori's Theorem \cite{Hattori} we have that the de Rham cohomology of the solvmanifold $M^6$
is isomorphic to the
Chevalley-Eilenberg cohomology of the Lie algebra ${\mathfrak s}$. In particular
$$
H^2(M^6, \R) = \R < [f^{14}], [f^{25}], [f^{36}]>.
$$
Consider the almost complex structure $J_0$ defined by the $(1,0)$-forms$$
\varphi^1 = f^1 + i f^4, \, \varphi^2 = f^2 + i f^5, \, \varphi^3 = f^3 + i f^6.
$$
Then $J_0$ is almost-K\"ahler with respect to
$$
\omega = f^{14} + f^{25}+ f^{36}
$$
and by \cite{FM} $(M^6, J_0, \omega)$ satisfies the Hard Lefschetz
condition. Moreover, $H^2(M^6, \R) = H^{1,1}_{J_0} (M^6)_{\R}$, since $f^{14}, f^{25}, f^{46}$
are of type $(1,1)$ with respect to $J_0$.

Define the family of almost complex structure corresponding to the matrix
$$
J_t = (I + L_t) J_0 (I + L_t)^{-1}
$$
with respect to the basis $(f^1, \ldots, f^6)$, where $I$ is the identity matrix of order $3$,
$J_0$ is the matrix
$$
J_0 = \left( \begin{array} {cc} 0&-I \\ I&0 \end{array} \right)
$$
and $L_t$ is the real symmetric matrix of order $6$ given by
$$
L_t = \left(\begin{array}{cc} 0&tI\\
tI &0 \end{array} \right)
$$
and such that $6t^2<1$.
Then, by \cite{Audin} (see Section \ref{calibrated}) the almost complex structure $J_t$ is
$\omega$-calibrated.

Consequently,
$$
J_t = \left( \begin{array}{cc} \frac{2t} {1 - t^2} I & -\frac{1+t^2}{1-t^2} I\\ [10pt]
\frac{1+t^2}{1-t^2} I & -\frac{2t} {1 - t^2} I
\end{array} \right)
$$
is a curve of almost-K\"ahler structures on $M$.\newline
Any almost complex structure $J_t$ is $\mathcal{C}^\infty$ pure, since it is $\omega$-calibrated. Moreover,
a basis of $(1,0)$-forms for $J_t$ is given by
$$
\begin{array}{l}
\varphi^1_t = f^1 + i \left(\frac{2t} {(1-t^2)} f^1 +\frac{1+t^2}{1-t^2}f^4 \right)\,,\\[10pt]
\varphi^2_t = f^2 + i \left( \frac{2t} {(1-t^2)}f^2 + \frac{1+t^2}{1-t^2}f^5 \right)\,,\\[10pt]
\varphi^3_t = f^3 + i \left( \frac{2t} {(1-t^2)} f^3 + \frac{1+t^2}{1-t^2}f^6 \right)\,.
\end{array}
$$
Then $J_t$ is a family of $\mathcal{C}^\infty$ pure and full almost complex structures, that
it is actually pure and full, since
$\varphi^1_t\wedge\overline{\varphi}^1_t$, $\varphi^2_t\wedge\overline{\varphi}^2_t$,
$\varphi^3_t\wedge\overline{\varphi}^3_t$
are harmonic (see Theorem \ref{harmonic}).

Consider the family $\tilde J_t$ of almost complex structures defined by
$$
\tilde J_t = J_0 (I + \tilde L_t) (I - \tilde L_t)^{-1},
$$
where $I$ is the identity matrix of order $6$ and $\tilde L_t$ is the non symmetric matrix given by
$$
\tilde L_t = \left( \begin{array}{cccccc} 0&0&0&0&0&0\\ 0&0&0&t&0&0\\ 0&0&0&0&0&0\\ 0&0&0&0&0&0\\
t&0&0&0&0&0\\ 0&0&0&0&0&0
\end{array} \right)
$$
Since $\tilde L_t J_0 = - J_0 \tilde L_t$, we have that $\tilde{J}_t$ is a family of $\omega$-tamed almost complex structures
defined by the $(1, 0)$-forms
$$
\begin{array}{l}
\tilde \varphi^1_t = f^1 + i (- 2t f^2 +f^4)\,,\\[10pt]
\tilde \varphi^2_t = f^2 + i f^5 \, , \\[10pt]
\tilde \varphi^3_t = f^3 + i f^6.
\end{array}
$$
Moreover, $\tilde{J}_t$
is a family of pure and full almost complex structures.

\section{The deformations $J_{\alpha}$}
In \cite{Lee} it was shown that on a almost-K\"ahler manifold $M$ of real dimension $2n$ there is natural
infinite-dimensional family of
almost complex structures $J_{\alpha}$ parametrized by the
$J$-anti invariant real $2$-forms on $M$.
Indeed, given a $2n$-dimensional almost-K\"ahler manifold $(M, J, \omega, g)$ one may consider
as in \cite{Lee} a natural infinite dimensional
family of almost complex structures parametrized by differential forms
in $\Omega^{(2,0),(0,2)}_J (M)_{\R}$. Each form $\alpha$
in $\Omega^{(2,0),(0,2)}_J (M)_{\R}$ defines an
endomorphism $K_{\alpha}$ of the tangent bundle $TM$ by
$$
g( X, K_{\alpha} Y) = \alpha (X,Y),
$$
for any pair of vector fields $X$ and $Y$.
Moreover, we have (see \cite{Lee})
$$
g(K_{\alpha} X, Y) = - g (X, K_{\alpha} Y), \quad J K_{\alpha} = - K_{\alpha} J,
\quad g (JX, K_{\alpha} X) =0,
$$
for any $X$ and $Y$. Since $J K_{\alpha}$ is skew-adjoint for each
$\alpha \in \Omega^{(2,0),(0,2)}_J(M)_{\R}$, we have that
$Id+J K_{\alpha}$ is invertible and hence
\begin{equation} \label{Jalpha}
J_{\alpha} = (Id+ J K_{\alpha} )^{-1} J (Id+J K_{\alpha} )
\end{equation}
is an almost complex structure on $M$. Moreover, by \cite[Proposition 1.5]{Lee} $J_{\alpha}$
satisfies the following condition:
\begin{equation} \label{compatibiltyJalpha}
g(J_{\alpha} X, J_{\alpha} Y) = g(X, Y).
\end{equation}
Moreover, if $n =2$, since in this case $K_ {\alpha}^2 = - \vert
\vert \alpha \vert \vert^2 Id$, then we have that $$J_{\alpha} = \frac{1
- \vert \vert \alpha \vert \vert^2} {1 + \vert \vert \alpha \vert
\vert^2} J - \frac {2} {1 + \vert \alpha \vert^2} K_{\alpha}$$
(see \cite[Proposition 1.5]{Lee}) and if we denote by
$J^*_{\alpha}$ the adjoint of $J_{\alpha}$ we have that the
symmetric part of $J^*_{\alpha} J$ is $\frac{1 - \vert \vert
\alpha \vert \vert^2} {1 + \vert \vert \alpha \vert \vert^2} Id$
and
$$
J^*_{\alpha} J J_{\alpha} = J + 4 \frac{ (\vert \vert \alpha \vert
\vert^2 - 1)}{(1 + \vert \vert \alpha \vert \vert^2)^2}
K_{\alpha}.
$$
Therefore
\begin{enumerate}
\item $J_{\alpha}$ is $\omega$-tamed if and only $\vert \vert \alpha \vert \vert^2 < 1$;\vskip.1truecm
\item $J_{\alpha}$ is $\omega$-calibrated if and only $\alpha =0$
or $\vert \vert \alpha \vert \vert^2 =1$.
\end{enumerate}
\smallskip

In this section we want to study the deformations $J_{\alpha}$ of a pure and full almost-K\"ahler
structure $J$.
Following \cite{Lee}, let us recall some general computations in dimension $4$
in order to apply these to the solvmanifold $M^4$ and to the nilmanifold $\tilde M^4$
constructed respectively in
section \ref{Sol3} and \ref{M2nil}. Both these examples have an almost-K\"ahler
structure $(J_0, \omega)$ with $J_0$ pure and full.

Let $M$ be a $4$-dimensional almost-K\"ahler manifold $(M, J, g, \omega)$. Locally, we
may assume that the almost complex structure $J$ is given with respect to an
orthonormal coframe $(e^1, \ldots, e^4 )$ by
$$
J = \left( \begin{array}{cccc} 0&-1&0&0\\1&0&0&0\\ 0&0&0&-1\\ 0&0&1&0 \end{array} \right).
$$
Let $\alpha$ be any real form in the space $\Omega^{(2,0),(0,2)} (M)_{\R}.$ Then the $2$-form
$$
\alpha = a (e^{13} - e^{24}) + b (e^{14} + e^{23})
$$
defines the endomorphism $K_{\alpha}
$ given with respect to the orthonormal coframe $(e^1, \ldots, e^4)$ by
$$
K_{\alpha} = \left( \begin{array} {cccc}
0&0&a&b\\
0&0&b&-a\\
-a&-b&0&0\\
-b&a&0&0 \end{array} \right)
$$
with $a, b \in \R$. Then we can consider on $M$ the almost complex
structure $J_{\alpha}$ given by \eqref{Jalpha}:
\begin{equation} \label{explicitJalpha}
J_{\alpha} = \frac{1} {(1 + \vert \vert \alpha \vert \vert^2)}
\left(
\begin{array}{cccc}
0& -1 + \vert \vert \alpha \vert \vert^2& -2a& -2b\\[2pt]
1-\vert \vert \alpha \vert \vert^2& 0&-2b& 2a\\[2pt]
2a& 2b&0& -1+ \vert \vert \alpha \vert \vert^2\\[2pt]
2 b& -2a& 1-\vert \vert \alpha \vert \vert^2&0
\end{array}
\right),
\end{equation}
where $\vert \vert \alpha \vert \vert^2 = a^2 + b^2$.
\newline
The associated $(1,0)$-forms are:
$$
\begin{array} {l}
\varphi^1 = e^1 + i \frac{1} {(1 + \vert \vert \alpha \vert \vert^2)}
\left( (1- \vert \vert \alpha \vert \vert^2)
e^2 +2 a e^3 +2 b e^4 \right),\\[10pt]
\varphi^2 = e^3 + i \frac{1} {(1 + \vert \vert \alpha \vert \vert^2)}
\left( -2 a e^1 +2 b e^2 + (1- \vert \vert \alpha \vert \vert^2)
e^4 \right).
\end{array}
$$
Therefore, we can prove the following
\begin{prop}
The almost-K\"ahler solvmanifold $(M^4, J_0, \omega)$ and the
almost-K\"ahler nilmanifold $(\tilde M^4, J_0, \omega)$
constructed in section \ref{Sol3} and \ref{M2nil} admit a family of pure and full almost complex structures $J_{\alpha}$
parametrized
by the left-invariant forms $a (e^{13} - e^{24})$, with $a \notin \{-1, 0, 1\}$, where $(e^i)$ is the basis
of left-invariant forms satisfying \eqref{eqstructuresol3} in the case of $M^4$ and
$$
\left \{ \begin{array} {l}
d e^j =0\,, \,\qquad j = 1, 3\,, \\[5pt]
d e^2= e^{14}\,,\\[5pt]
d e^4 = e^{13}
\end{array}
\right.
$$
in the case of $\tilde{M}^4$.
\end{prop}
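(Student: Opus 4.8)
The plan is to reduce the claim to the case $n=2$ of Theorem \ref{harmonic}, which asserts that on a compact $4$-manifold any $\mathcal{C}^{\infty}$ pure and full almost complex structure that is calibrated by \emph{some} almost symplectic form is automatically pure and full. Consequently, for each admissible $a$ it is enough to verify three facts about $J_{\alpha}$: that it is a genuine almost complex structure, that it is $\mathcal{C}^{\infty}$ pure and full, and that it is calibrated by a (not necessarily closed) non-degenerate $2$-form. In particular this route never requires computing the type decomposition of $H^2$.

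First I would note that, in the chosen coframe, both $M^4$ and $\tilde M^4$ carry the \emph{standard} almost-Kähler datum $(J_0,\omega,g)$, with $\eta^1=e^1+ie^2$, $\eta^2=e^3+ie^4$, $\omega=e^{12}+e^{34}$, and $g$ the metric for which $(e^1,\dots,e^4)$ is orthonormal. In particular the left-invariant $2$-form $\alpha=a(e^{13}-e^{24})$ lies in $\Omega^{(2,0),(0,2)}_{J_0}(M)_{\R}$, that is, it is $J_0$-anti-invariant, so the construction of the present section applies and yields the left-invariant endomorphism $J_{\alpha}$ of \eqref{explicitJalpha} with $b=0$ and $\|\alpha\|^2=a^2$. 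By \eqref{Jalpha}, $J_{\alpha}$ is conjugate to $J$ through $Id+JK_{\alpha}$, which is invertible since $JK_{\alpha}$ is skew-adjoint; hence $J_{\alpha}^2=-Id$ and $J_{\alpha}$ is a genuine (left-invariant) almost complex structure, defined for every $a$ because the denominator $1+a^2$ never vanishes. The exclusion $a\notin\{-1,0,1\}$ only serves to single out the new structures: $a=0$ gives back $J_0$, whereas $\|\alpha\|^2=a^2=1$ is exactly the locus on which $J_{\alpha}$ is calibrated by the original $\omega$; for every other $a$ the structure $J_{\alpha}$ is neither $J_0$ nor $\omega$-calibrated.

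The central observation is that $J_{\alpha}$ is still calibrated, but by a modified form. By \eqref{compatibiltyJalpha} the metric $g$ is $J_{\alpha}$-Hermitian, so $\omega_{\alpha}(X,Y):=g(J_{\alpha}X,Y)$ is skew-symmetric and non-degenerate, and one checks directly that $\omega_{\alpha}(J_{\alpha}X,J_{\alpha}Y)=\omega_{\alpha}(X,Y)$ together with $\omega_{\alpha}(X,J_{\alpha}X)=g(X,X)>0$. Thus $(M,\omega_{\alpha})$ is almost symplectic and $J_{\alpha}$ is calibrated by $\omega_{\alpha}$. Since $M^4$ and $\tilde M^4$ are compact of real dimension $4$, \cite[Theorem 2.3]{DLZ} shows that $J_{\alpha}$ is $\mathcal{C}^{\infty}$ pure and full; applying Theorem \ref{harmonic} to $(M,\omega_{\alpha},J_{\alpha})$ with $n=2$ then gives that $J_{\alpha}$ is pure and full.

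I expect no serious obstacle along this route: the only genuine verifications are that $\alpha$ is $J_0$-anti-invariant, which makes \eqref{compatibiltyJalpha} available, and that $\omega_{\alpha}$ is non-degenerate, both of which are immediate. The truly laborious step — which the argument above deliberately avoids — would arise if one insisted on a self-contained proof bypassing \cite{DLZ}: one would then have to split the de Rham generators $[e^{12}],[e^{34}]$ (and their analogues for $\tilde M^4$) into their $J_{\alpha}$-invariant and $J_{\alpha}$-anti-invariant parts and produce closed representatives of each type, which for $a\notin\{-1,0,1\}$ is delicate because neither generator is of pure type and the left-invariant anti-invariant forms $\mathrm{Re}(\varphi^1\wedge\varphi^2)$, $\mathrm{Im}(\varphi^1\wedge\varphi^2)$ are not closed.
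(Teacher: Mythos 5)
Your argument is correct, but it follows a genuinely different route from the paper's. The paper proves the proposition by direct computation: it writes down the $(1,0)$-forms $\varphi^1,\varphi^2$ of $J_\alpha$ and exhibits explicit closed left-invariant forms of pure type whose classes span $H^2$ (for $M^4$, the $(1,1)$-forms $i\varphi^j\wedge\overline\varphi^j$ represent nonzero multiples of $[e^{12}]$ and $[e^{34}]$ because $e^{13}$ is exact, and similarly for $\tilde M^4$), so that one sees concretely that $H^2=H^{1,1}_{J_\alpha}(M)_\R$ and reads off the whole type decomposition; fullness and pureness then follow as in Theorem \ref{harmonic}. You instead bypass all computation: you observe that $\alpha$ is $J_0$-anti-invariant so the construction applies, that $g$ is $J_\alpha$-Hermitian by \eqref{compatibiltyJalpha} so that $\omega_\alpha:=g(J_\alpha\cdot,\cdot)$ is a calibrating almost symplectic form, and then you invoke \cite[Theorem 2.3]{DLZ} together with the $n=2$ case of Theorem \ref{harmonic}. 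This is valid, and indeed it is exactly the content of the remark the paper states right after Theorem \ref{harmonic}; the price is that your proof uses nothing specific about $J_\alpha$ or about the restriction $a\notin\{-1,0,1\}$ (it would show that \emph{every} almost complex structure on these $4$-manifolds is pure and full), so it establishes the literal statement but not the extra information the paper's computation delivers, namely the explicit identification of $H^{1,1}_{J_\alpha}(M)_\R$ and $H^{(2,0),(0,2)}_{J_\alpha}(M)_\R$ along the family. Your closing observation that the anti-invariant left-invariant forms $\mathrm{Re}(\varphi^1\wedge\varphi^2)$, $\mathrm{Im}(\varphi^1\wedge\varphi^2)$ fail to be closed for the admissible $a$ is accurate and is consistent with the paper's computation, which finds $H^{(2,0),(0,2)}_{J_\alpha}(M)_\R=0$ in the solvmanifold case.
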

\begin{proof} For both almost-K\"ahler manifolds $M^4$ and $\tilde M^4$ we have
$$
\omega = e^{12} + e^{34}
$$
and $J_0 e^1 = e^2, J_0 e^3 = e^4$.

Consider the almost complex structure $J_{\alpha}$, given by \eqref{explicitJalpha}
with $b=0$, $a \neq \pm 1, a \neq 0$, then it turns out that $J_{\alpha}$ is pure and full in both cases.

In the case of $M^4$, we have
$$
\begin{array}{l}
i \varphi^1 \wedge \overline \varphi^1 = \frac{1}{(1 + |a|^2)} (- 4 a e^{13}+ 2 (1 - |a|^2) e^{12})\\[10pt]
i\varphi^2 \wedge \overline \varphi^2= \frac{1}{(1 + |a|^2)} (-4 a e^{13} -2 (1- |a|^2) e^{34})
\end{array}
$$ and for the nilmanifold $\tilde M^4$
$$
\begin{array}{l}
i (\varphi^1 \wedge \overline \varphi^2 + \overline \varphi^1
\wedge \varphi^2) = \frac{2 (-1 + \vert \vert \alpha \vert
\vert^2) } {(1 + \vert \vert \alpha \vert \vert^2)}
( e^{23} - e^{14})\,,\\[10pt]
i (\varphi^1 \wedge \varphi^2 - \overline \varphi^1 \wedge
\overline \varphi^2) = \frac{2 (-1 + \vert\vert \alpha \vert
\vert^2) } {(1 + \vert \vert \alpha \vert \vert^2)}
( e^{23} + e^{14})\,.\\
\end{array}
$$
\end{proof}

\end{document}